\def\A{{\mathcal{A}}}
\def\R{{\mathbb{R}}}
\def\K{{\mathbb{K}}}
\def\p{{\partial}}
\def\res{{\mathbf{res}}}
\def\S{{\mathbf{c}\mathrm{Shi}}}
\def\C{{\mathbf{c}\mathrm{Cat}}}
\def\sp{{\mathrm{SRB}_{+}}}
\def\sm{{\mathrm{SRB}_{-}}}
\DeclareMathOperator{\Der}{Der}
\newtheorem{theorem}{Theorem}[section]
\newtheorem{prop}[theorem]{Proposition}
\newtheorem{cor}[theorem]{Corollary}
\newtheorem{lemma}[theorem]{Lemma}
\newtheorem{define}[theorem]{Definition}
\newtheorem{rem}[theorem]{Remark}
\title{
A basis construction of the extended Catalan and Shi arrangements of the type $A_{2}$}
\author{
Takuro Abe\thanks{Department of Mechanical Engineering and Science, Kyoto University, Kyoto 606-8501, Japan. 
e-mail:abe.takuro.4c@kyoto-u.ac.jp} and 
Daisuke Suyama\thanks{Department of Mathematics, Hokkaido University, Sapporo, Hokkaido 060-0810, Japan.
email:dsuyama@math.sci.hokudai.ac.jp
This work was supported by JSPS KAKENHI Grant Number 24$\cdot$6346.}
}
\date{}
\begin{document}

\maketitle

\begin{abstract} 
In \cite{tmultiderivations}, Terao proved the freeness of 
multi-Coxeter arrangements with constant 
multiplicities by giving an explicit construction of bases. Combining it with 
algebro-geometric method, Yoshinaga proved the freeness of 
the extended Catalan and Shi arrangements in \cite{ycharacterization}. 
However, there have been no explicit constructions of 
the bases for the logarithmic derivation modules of 
the extended Catalan and Shi arrangements. 
In this paper, we give the first explicit construction of them 
when the root system is of the type $A_2$. 
\end{abstract} 

{\footnotesize {\it Keywords:} Hyperplane arrangement, Shi arrangement, Catalan arrangement, Free arrangement, Weyl group, Affine Weyl group, Logarithmic derivations}

{\footnotesize {\it 2010 MSC:}  32S22, 20F55, 13N15}

\section{Introduction} 

Let $V$ be an $\ell$-dimensional vector space over a field $\K$.
Let $S=S(V^{*})$ be the symmetric algebra of the dual space $V^{*}$
and $\Der (S)$ the module of derivations
\[
\Der (S)
=\{ \theta :S \rightarrow S \mid \theta \text{ is } \mathbb{K} \text{-linear},\ 
\theta (fg)=\theta (f)g+f\theta (g)\ (f,g \in S) \}.
\]
An (affine) arrangement (of hyperplanes) $\A$ is the finite set of affine hyperplanes in 
$V$. An arrangement is central if every hyperplane in $\A$ is linear. 
For a central arrangement $\A$, the \textbf{logarithmic derivation module} 
$D(\A)$ is defined by 
\begin{align*}
D(\A) &= \{ \theta \in \Der (S) \mid
\theta (Q(\A)) \in Q(\A) S \} \\
&= \{ \theta \in \Der (S) \mid
\theta (\alpha_{H}) \in \alpha_{H} S \text{ for all } H \in \A \},
\end{align*}
where $\alpha_{H}\ (H \in \A)$ is a linear form such that $\ker (\alpha_{H}) = H$
and $Q(\A)$ is the defining polynomial of $\A$, 
that is, $Q(\A)=\prod_{H \in \A} \alpha_{H}$.
When $D(\A)$ is a free $S$-module, 
$\A$ is called a \textbf{free arrangement}.
Then there exists a homogeneous basis 
$\{ \theta_{1}, \ldots ,\theta_{\ell} \}$ for $D(\A)$
and $\exp \A := ( \deg \theta_{1}, \ldots ,\deg \theta_{\ell} )$
is called the \textbf{exponents} of $\A$.   
For an affine arrangement $\A$ in $V$, 
$\mathbf{c}\A$ denotes the \textbf{cone} 
\cite[Definition 1.15]{otarrangements} over $\A$.
The cone $\mathbf{c}\A$ is a central arrangement in
an $(\ell +1)$-dimensional vector space $U$.
We may regard $U^{*}=V^{*} \oplus \langle z \rangle$
by using a new coordinate $z$, 
and let $S_{z}$ denote the symmetric algebra $S(U^{*})$ 
of the dual space $U^{*}$.

Let $E$ be an $\ell$-dimensional Euclidean space 
and $\Phi$ be a crystallographic irreducible root system 
in the dual space $E^{*}$.
Let $\Phi^{+}$ be a positive system of $\Phi$.
For $\alpha \in \Phi^{+}$ and $i \in \mathbb{Z}$,
define the affine hyperplane $H_{\alpha,i}$ by
\[
H_{\alpha,i}:=
\{ v \in V \mid \alpha (v)=i \}.
\]
Then the arrangement 
$\A_{\Phi}=\{ H_{\alpha,0} \mid \alpha \in \Phi^{+} \}$
is called the \textbf{Weyl arrangement} of the type $\Phi$.

\begin{define}
Let $k \in \mathbb{Z}_{\geq 0}$.
Then the \textbf{extended Shi arrangement} $\mathrm{Shi}^{k}$ of the type $\Phi$ 
and the \textbf{extended Catalan arrangement} $\mathrm{Cat}^{k}$ of the type $\Phi$
are affine arrangements defined by
\begin{align*}
\mathrm{Shi}^{k}
&:=\{ H_{\alpha,i} \mid \alpha \in \Phi^{+}, -k+1\leq i \leq k \}, \\
\mathrm{Cat}^{k}
&:=\{ H_{\alpha,i} \mid \alpha \in \Phi^{+}, -k\leq i \leq k \}.
\end{align*}
\end{define}

In particular, 
the arrangement $\mathrm{Shi}^{1}$ is
called the \textbf{Shi arrangement} which was introduced by 
J.-Y. Shi in \cite{skazhdan} in the study of the Kazhdan-Lusztig representation theory 
of the affine Weyl groups.

There are a lot of researches on the freeness of 
the cones over the extended Catalan 
and Shi arrangements. The first breakthrough 
was the proof of the freeness of multi-Coxeter arrangements with constant 
multiplicities by Terao in \cite{tmultiderivations}. 
Combining this result with algebro-geometric methods, Yoshinaga proved 
the freeness of the cones over the extended Catalan
and Shi arrangements in \cite{ycharacterization}. 
However, there have been few researches how to construct their explicit bases. 
Recently, in the case of types $A_{\ell},B_{\ell},C_{\ell},D_{\ell}$, 
explicit bases for the cone $\S^1$ over the Shi arrangements were constructed (\cite{stshi},\cite{sbasis},\cite{gptshi}).
Also, a nice basis for the extended Shi arrangements was determined in \cite{atsimple}. 

In this paper, we give the first explicit construction of a series of 
bases for the extended Catalan and 
Shi arrangements when the corresponding root system is of the type $A_{2}$. 
Namely, we construct bases for the logarithmic modules of these arrangements as follows:

\begin{theorem}
\label{main}
Let $\Phi$ be the root system of the type $A_2$, 
$\{\alpha_1,\alpha_2\}$ a simple system and 
$\{\partial_1,\partial_2\}$ its dual basis for 
$\Der (S)$. 
For $i \in \mathbb{Z}_{\geq 0}$, define
\[
M_{n}=
\begin{pmatrix}
\alpha_{1} +nz & (2\alpha_{1}+4\alpha_{2}+3nz)(\alpha_{1}+nz) \\
\alpha_{2} +nz & -(4\alpha_{1}+2\alpha_{2}+3nz)(\alpha_{2}+nz)
\end{pmatrix},
\]
\begin{align*}
N_{n} 
&=
\begin{pmatrix}
0 & 1 \\
1 & 0
\end{pmatrix}
(M_{n}^T)|_{z\rightarrow -z}
\\
&=
\begin{pmatrix}
(2\alpha_{1}+4\alpha_{2}-3nz)(\alpha_{1}-nz) & -(4\alpha_{1}+2\alpha_{2}-3nz)(\alpha_{2}-nz) \\
\alpha_{1}-nz & \alpha_{2}-nz
\end{pmatrix},
\end{align*}
\[
T_{n}=
\begin{pmatrix}
\dfrac{1}{3n+1} & 0 \\
0 & \dfrac{1}{3n+2}
\end{pmatrix},
\]
\[
A=[I^{*}(\alpha_{i},\alpha_{j})]_{1 \leq i,j \leq 2}=
\begin{pmatrix}
2 & -1 \\
-1 & 2
\end{pmatrix},
\]
where $I^{*}$ is the natural inner product on $E^{*}$
induced from the $W$-invariant inner product $I$ on $E$.
Then the Euler derivation and 
\[
[\partial_{1},\partial_{2}] \prod_{i=0}^{k-1}(M_{i} T_i N_{i+1}A^{-1})
\]
form a basis for $D(\S^{k})$, and
\[
[\partial_{1},\partial_{2}] (\prod_{i=0}^{k-1}(M_{i} T_i N_{i+1}A^{-1}))M_k
\]
a $W$-invariant basis for $D(\C^{k})$.
\end{theorem}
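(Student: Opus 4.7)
The plan is to apply Saito's criterion: a triple of homogeneous derivations $\{\eta_0,\eta_1,\eta_2\} \subset \Der(S_z)$ forms a basis of $D(\A)$ for a central arrangement $\A$ in the three-dimensional cone space $U$ precisely when each $\eta_j$ lies in $D(\A)$ and the Jacobian $\det[\eta_j(x_i)]$ is a nonzero scalar multiple of $Q(\A)$. The proof thus decomposes into a determinant computation and a divisibility check for each of the two bases.

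The determinant computation is the easier half. Direct expansion yields $\det M_n = -6(\alpha_1+nz)(\alpha_2+nz)(\alpha_1+\alpha_2+nz)$ and $\det N_n = 6(\alpha_1-nz)(\alpha_2-nz)(\alpha_1+\alpha_2-nz)$, while $\det T_n = ((3n+1)(3n+2))^{-1}$ and $\det A^{-1} = 1/3$. Writing $P_k := \prod_{i=0}^{k-1}(M_i T_i N_{i+1}A^{-1})$ and observing that the non-Euler derivations kill $z$, the $3\times 3$ Saito determinant collapses to $z \cdot \det P_k$; a telescoping reindexing of the product of factor-determinants then recovers $c_k \cdot Q(\S^k)$ for a nonzero constant $c_k$. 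The Catalan case follows by appending the factor $\det M_k = -6\prod_\alpha(\alpha+kz)$ and using the identity $Q(\C^k) = \prod_\alpha(\alpha+kz)\cdot Q(\S^k)$.

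The divisibility check is carried out by induction on $k$ along the chain $\S^0 \subset \C^0 \subset \S^1 \subset \C^1 \subset \cdots$, each step adjoining three new hyperplanes. The base case $k=0$ is immediate: the empty product gives $[\partial_1,\partial_2]$ which, together with Euler, is a basis for $D(\S^0)=D(\{z=0\})$, while multiplying by $M_0$ yields the classical primitive basis of the Weyl arrangement $D(\C^0)=D(\mathbf{c}\A_\Phi)$. The crucial device in the inductive step is a \emph{strengthened} invariant: one maintains not merely that $[\partial_1,\partial_2]P_k$ lies in $D(\S^k)$, but that the matrix $P_k = ((P_k)_{ij})$ additionally satisfies $\alpha_1+kz \mid (P_k)_{12}$, $\alpha_2+kz \mid (P_k)_{21}$, and a compatible linear relation modulo $\alpha_1+\alpha_2+kz$. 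Under this invariant, post-multiplication by $M_k$ — whose first column is exactly $(\alpha_1+kz,\alpha_2+kz)^T$ and whose second column has $(\alpha_j+kz)$ as factors — forces the derivations $[\partial_1,\partial_2]P_k M_k$ to be divisible by each of the three newly added defining forms $\alpha+kz$, placing them in $D(\C^k)$. The subsequent factor $T_k N_{k+1}A^{-1}$ then transports the basis to $D(\S^{k+1})$ using the analogous structure of $N_{k+1}$ at level $-(k+1)$, and one verifies that the strengthened invariant re-emerges at level $k+1$.

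The main obstacle is rigorously propagating this strengthened invariant through the induction: the entries of $P_k$ must carry, beyond the expected Shi factors $\alpha-iz$ for $i\in[-k+1,k]$, an extra factor of $\alpha+kz$ in specific off-diagonal positions that is not forced by membership in $D(\S^k)$ alone. A small computation at $k=1$ already exhibits the phenomenon: one finds $(P_1)_{12}=\alpha_1(\alpha_1-z)(\alpha_1+z)$, carrying the unexpected factor $\alpha_1+z$. To cut the casework, the plan is to exploit the matrix duality $N_n = \left(\begin{smallmatrix}0&1\\1&0\end{smallmatrix}\right)(M_n^T)|_{z\to -z}$ from the theorem statement, which interchanges positive- and negative-level hyperplanes and accounts for the $W$-invariance of the Catalan basis, together with the $W=S_3$ action permuting $\{\alpha_1,\alpha_2,\alpha_1+\alpha_2\}$: verification of the divisibility for one positive root yields the other two by symmetry, and the $z\to -z$ duality reduces negative-level verifications to positive-level ones. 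The technical core of the argument will be reducing the running product modulo each linear form $\alpha+kz$ and exhibiting the required vanishing directly from the algebraic identities encoded in $M_n$, $T_n$, $N_n$, and the Cartan matrix $A$.
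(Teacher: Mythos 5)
Your Saito-determinant half is sound and matches the computations in the paper ($\det M_k=-6(\alpha_1+kz)(\alpha_2+kz)(\alpha_1+\alpha_2+kz)$, $\det N_k^{-1}=\frac{1}{6}\prod(\alpha-kz)^{-1}$, etc.), but it only reduces the problem to the membership statement, and that is where your argument has a genuine gap. The step you defer as ``the technical core'' --- reducing the running product modulo the newly adjoined linear forms and ``exhibiting the required vanishing directly from the algebraic identities encoded in $M_n$, $T_n$, $N_n$, $A$'' --- is not a routine verification, and no invariant formulated at level $k$ of the kind you describe can force it. Concretely: the columns of $N_{k+1}$ do give you for free that the $j$-th member of $[\partial_1,\partial_2]P_kM_kT_kN_{k+1}$ is divisible, as a derivation, by $\alpha_j-(k+1)z$; but membership in $D(\S^{k+1})$ also requires, e.g., that this derivation applied to $\alpha_2$ be divisible by $\alpha_2-(k+1)z$ when $j=1$, and applied to $\alpha_1+\alpha_2$ be divisible by $\alpha_1+\alpha_2-(k+1)z$. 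These are congruences modulo linear forms ($\alpha\pm(k+1)z$) that do not occur in any of the matrices $M_0,\dots,M_k$, $N_1,\dots,N_k$ used to build $P_k$, so they cannot be propagated from a divisibility hypothesis carried along the induction; they encode precisely the content of Terao's multiderivation theorem and Yoshinaga's freeness criterion. The same objection applies to your appeal to the $W=S_3$ symmetry: the product $P_k$ is written in the simple roots and is not manifestly equivariant, so ``one root implies the other two'' is itself something that must be proved (the paper does this via Proposition \ref{reflection}, an imported nontrivial property of the simple-root bases, not by inspection of the matrices).

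The paper's route is structurally the reverse of yours and avoids this problem: it does not prove divisibility going forward. It takes as given the \emph{existence} of the simple-root bases $\varphi_i^{(k)}$, $\psi_i^{(k)}$ of $D_0(\S^k)$ for every $k$ (from \cite{atsimple}, resting on \cite{ycharacterization} and \cite{tmultiderivations}), together with their characterizations $\varphi_i^{(k)}(\alpha_j+kz)\in(\alpha_j+kz)S_z$ and $\psi_i^{(k)}\in(\alpha_i-kz)\Der(S_z)$. Propositions \ref{srb+} and \ref{srb-} then show that $[\varphi_1^{(k)},\varphi_2^{(k)}]M_k$ and $[\psi_1^{(k+1)},\psi_2^{(k+1)}]N_{k+1}^{-1}$ are both $W$-invariant bases of $D_0(\C^k)$ of the same degrees, so they differ by a matrix $T_k$; degree counting, $W$-invariance and $\tau$-invariance force $T_k$ to be constant diagonal, and its entries are computed by restricting to $z=0$ and invoking Terao's matrix $B^{(k)}$ (Theorem \ref{rest}, Proposition \ref{t}). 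If you want to salvage your elementary induction you would have to supply, at each step, an independent proof of the congruences modulo $\alpha\pm(k+1)z$ --- effectively re-proving the freeness results the paper imports --- or else restructure your argument around the existence of the simple-root bases as the paper does.
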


The idea to prove Theorem \ref{main} is to use the simple-root bases (\cite{atsimple}) and 
Terao's matrix $B^{(k)}$ (\cite{tmultiderivations}, \cite{atprimitive}) with the invariant theory. 
Namely, if we fix a simple system and 
a primitive derivation, then we know the existence of a family of nice bases 
(the simple-root basis plus/minus) 
for the logarithmic modules of $\S^k$ for all $k \in \mathbb{Z}_{\geq 0}$. 
In general, we cannot compute the explicit form of simple-root bases. However, 
by computations based on 
invariant theory and Weyl group actions, we can find a way to 
construct the bases for that of $\C^{k}$ from 
simple-root bases. 
by restricting them onto the infinite hyperplane  
and applying the invariant theoretic method, 
we may connect these new bases. Hence starting from the empty arrangement, 
we can construct the required bases for the extended Catalan and 
Shi arrangements inductively. In that invariant theory, Terao's matrix 
$B^{(k)}$ plays the essential role. 

The organization of this paper is as follows.
In section 2, we review the simple-root bases 
for the extended Shi arrangements introduced 
in \cite{atsimple}, which play key roles in our construction of bases. 
In section 3, we give an explicit construction of bases for
the extended Catalan and Shi arrangements of the type $A_{2}$ in 
Theorem \ref{cons} for a certain primitive derivation. Using Theorem \ref{cons}, 
we prove Theorem \ref{main}.

\section{Preliminaries}

In this section we review the definition and properties of multiarrangements and 
the simple-root bases for 
the extended Shi arrangements.

First, let $\A$ be a central arrangement in $V=\mathbb{K}^\ell$,
$\{ x_{1},\ldots, x_{\ell} \}$ a basis for $V^{*}$
and fix $H \in \A$. Define 
$$
D_0(\A):=\{\theta \in D(\A) \mid \theta(\alpha_H)=0\}.
$$
Then it is known (e.g., see \cite[Proposition 4.27]{otarrangements}) that $$
D(\A) = S \theta_E \oplus D_0(\A)
$$
for the Euler derivation $\theta_E:=\sum_{i=1}^\ell x_i \frac{\partial}{\partial x_{i}}$. 
Hence $\A$ is free if and only if $D_0(\A)$ is a free $S$-module, and 
$\theta_2,\ldots,\theta_\ell$ form a basis for $D_0(\A)$ if and only if 
$\theta_E,\theta_2,\ldots,\theta_\ell$ form a basis for $D(\A)$. 
To check the freeness, the following is the most convenient.

\begin{prop}{\bf (Saito's criterion, \cite{stheory})}
Let $\theta_1,\ldots,\theta_\ell \in D(\A)$ and 
$M:=(\theta_i(x_j))$. Then $D(\A)$ is a free $S$-module with 
basis $\theta_1,\ldots,\theta_\ell$ if and only if 
$\det M=c\prod_{H \in \A} \alpha_H$ for some non-zero $c \in \mathbb{K}$.
\label{saito}
\end{prop}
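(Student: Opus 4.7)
The plan is to prove Theorem \ref{main} by induction on $k \geq 0$, verifying each stage via Saito's criterion (Proposition \ref{saito}). The base case $k=0$ is direct: for the Shi side, $\S^{0}$ is defined by $z=0$, and the Saito determinant of $\{\theta_E,\partial_1,\partial_2\}$ equals $z = Q(\S^{0})$; for the Catalan side, $\C^{0} = \mathbf{c}\A_{\Phi}$ is the cone over the $A_2$ Weyl arrangement, and one checks that $\theta_E$ together with the two entries of $[\partial_1,\partial_2]M_0$ has Saito determinant proportional to $z\,\alpha_1\alpha_2(\alpha_1+\alpha_2) = Q(\C^{0})$.

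For the inductive step, I would interpret each factor in the product as a transition between consecutive arrangements, guided by the simple-root basis framework of \cite{atsimple}. Right-multiplication by $M_k$ sends a simple-root basis of $D_0(\S^{k})$ to a $W$-invariant basis of $D_0(\C^{k})$: the arrangement $\C^{k}$ adds the three hyperplanes $\{H_{\alpha,-k} : \alpha \in \Phi^{+}\}$ to $\S^{k}$, and a direct expansion gives $\det M_k = -6(\alpha_1+kz)(\alpha_2+kz)(\alpha_1+\alpha_2+kz)$, which is exactly the defining polynomial of those hyperplanes. Right-multiplication by $T_k N_{k+1} A^{-1}$ then sends this $W$-invariant basis to a new simple-root basis of $D_0(\S^{k+1})$, and a parallel computation gives $\det N_{k+1} = 6(\alpha_1-(k+1)z)(\alpha_2-(k+1)z)(\alpha_1+\alpha_2-(k+1)z)$, the defining polynomial of the added hyperplanes $\{H_{\alpha,k+1}\}$. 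The normalizations $T_k = \operatorname{diag}(1/(3k+1),1/(3k+2))$ and $A^{-1}$ (with $\det A = 3$) absorb the resulting scalar $-12/((3k+1)(3k+2))$ from the product of determinants, leaving the Saito determinant of the proposed basis for $\S^{k+1}$ proportional to $Q(\S^{k+1})$.

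The main obstacle is establishing the explicit form of $M_n$ and $N_n$ themselves, which is the content of Theorem \ref{cons} to be proved in Section 3. The strategy there is to fix a specific primitive derivation, use Terao's matrix $B^{(k)}$ from \cite{tmultiderivations,atprimitive} together with $A_2$ Weyl group invariant theory to write out the simple-root plus and minus bases explicitly, and then restrict to the infinite hyperplane $z=0$ to read off the one-step recursion. The identity $N_n = \left(\begin{smallmatrix}0 & 1 \\ 1 & 0\end{smallmatrix}\right)(M_n^T)|_{z \to -z}$ encodes a duality between the simple-root plus and minus bases under the longest element of $W$ composed with $z \mapsto -z$, and it is precisely this duality that allows the passage from $\C^{k}$ back to $\S^{k+1}$ to be expressed in closed form. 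Once Theorem \ref{cons} is in hand, Theorem \ref{main} follows from the inductive application of Saito's criterion sketched above, with the Catalan statement appearing as the intermediate step between the Shi bases of consecutive levels.
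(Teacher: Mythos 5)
There is a fundamental mismatch here: the statement you were asked to prove is Saito's criterion itself (Proposition \ref{saito}), i.e.\ the equivalence ``$\theta_1,\ldots,\theta_\ell$ form a basis for $D(\A)$ $\iff$ $\det M = c\prod_{H\in\A}\alpha_H$ with $c\neq 0$,'' but your text never argues either direction of this equivalence. Instead, you have sketched a proof of Theorem \ref{main} --- the inductive construction of bases for $D(\S^k)$ and $D(\C^k)$ via the matrices $M_k$, $T_k$, $N_{k+1}$, $A^{-1}$ --- and in every step you \emph{invoke} Proposition \ref{saito} as a black box (``verifying each stage via Saito's criterion''). Using the statement as a tool is the opposite of proving it; as a proof of the proposition, the proposal is vacuous. (In the paper itself the proposition carries no proof: it is quoted from Saito's work \cite{stheory}, consistent with it being a foundational criterion rather than a result of this paper.)

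A genuine proof would have to supply, roughly: (i) for any $\theta_1,\ldots,\theta_\ell \in D(\A)$, the divisibility $\prod_{H\in\A}\alpha_H \mid \det M$, obtained by noting that for each $H$ one may change coordinates so that $\alpha_H$ is a coordinate, whence the corresponding column of $M$ lies in $\alpha_H S$, and the $\alpha_H$ are pairwise coprime; (ii) the converse implication that $\det M = cQ(\A)$, $c \neq 0$, forces the $\theta_i$ to generate $D(\A)$ freely --- independence is immediate from $\det M \neq 0$, and generation follows by Cramer's rule: writing an arbitrary $\theta \in D(\A)$ as $\sum f_i\theta_i$ with $f_i = \det M_i/\det M$, where $M_i$ replaces the $i$-th row of $M$ by $(\theta(x_j))$, the same divisibility argument as in (i) shows $Q(\A) \mid \det M_i$, so $f_i \in S$; and (iii) the forward implication that a basis has $\det M \doteq Q(\A)$, e.g.\ by observing $Q(\A)\partial_j \in D(\A)$ gives $Q(\A)^\ell = \det N \cdot \det M$ for some matrix $N$ over $S$, so $\det M$ divides $Q(\A)^\ell$, and then pinning the multiplicity of each $\alpha_H$ in $\det M$ to exactly one by localizing at a generic point of $H$. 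None of these ingredients appears in your proposal, so it does not establish the statement at all.
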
 

For a fixed $H \in \A$, let $\A^H:=\{K \cap H \mid K \in \A \setminus \{H\}\}$ and define a map $m_H:
\A^H \rightarrow \mathbb{Z}_{>0}$ by 
$$
m_H(K \cap H):=|\{L \in \A \setminus \{H\} \mid 
L \cap H=K \cap H\}|.
$$
Then for a logarithmic module 
\begin{multline*}
D(\A^H,m_H):=\{\theta \in \mbox{Der}(S/(\alpha_H)) 
\mid 
\theta(\alpha_K) \in (S/(\alpha_H)) (\alpha_K)^{m_H(K)} \\ 
\text{ for any } K \in \A^H \},
\end{multline*}
the Ziegler restriction map $\res :D_0(\A) \rightarrow D(\A^H,m_H)$ 
is defined by $\res (\theta):=\theta|_{\alpha_H=0}$. For 
details, see \cite{zmultiarrangements}. 

\begin{prop}[\cite{zmultiarrangements}]
Assume that $\A \neq \emptyset$ is free with $\exp(\A)=(1,d_2,\ldots,d_\ell)$. 
Then $D_0(\A^H,m_H)$ is also free with basis $\varphi_2,\ldots,\varphi_\ell$ 
such that $\deg (\varphi_i)=d_i\ (i=2,\ldots,\ell)$. Moreover, the Ziegler restriction map 
is surjective.
\label{ziegler}
\end{prop}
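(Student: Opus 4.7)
The plan is to prove Theorem~\ref{main} by induction on $k$, using Saito's criterion (Proposition~\ref{saito}) at each stage to certify that the candidate triple forms a basis. Saito's criterion splits the verification into two independent checks: the determinant of the Saito matrix should be a nonzero scalar multiple of the defining polynomial, and each candidate derivation should lie in the relevant logarithmic module. The determinantal part is essentially computation: a direct expansion gives $\det M_n=-6(\alpha_1+nz)(\alpha_2+nz)(\alpha_1+\alpha_2+nz)$ and $\det N_n=6(\alpha_1-nz)(\alpha_2-nz)(\alpha_1+\alpha_2-nz)$, and since derivations in the image of $[\partial_1,\partial_2]$ annihilate $z$, expanding the Saito matrix along the $z$-column shows its determinant is $z\cdot\det P$ where $P$ is the matrix product. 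Combined with $\det T_i=1/((3i+1)(3i+2))$ and $\det A^{-1}=1/3$, this telescopes level by level into a nonzero scalar multiple of $Q(\S^k)$ or $Q(\C^k)$.

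The inductive backbone reflects the filtration $\S^k\subset\C^k\subset\S^{k+1}$: passage from $\S^k$ to $\C^k$ adjoins the three hyperplanes $\alpha+kz=0$ for $\alpha\in\Phi^+$ (whose product is a scalar multiple of $\det M_k$), and passage from $\C^k$ to $\S^{k+1}$ adjoins $\alpha-(k+1)z=0$ for $\alpha\in\Phi^+$ (a scalar multiple of $\det N_{k+1}$). The base case $k=0$ is that $\theta_E,\partial_1,\partial_2$ is a basis for $D(\S^0)=D(\{z=0\})$, which is immediate. For the inductive step, writing $[\eta_1,\eta_2]:=[\partial_1,\partial_2]\prod_{i=0}^{k-1}(M_i T_i N_{i+1}A^{-1})$, I would first show that if this is a $D_0$-basis for $\S^k$, then $[\eta_1,\eta_2]M_k$ is a $D_0$-basis for $\C^k$, and then that $[\eta_1,\eta_2]M_k T_k N_{k+1}A^{-1}$ is a $D_0$-basis for $\S^{k+1}$. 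Since the new derivations are $S$-linear combinations of the old ones, containment at previously-adjoined hyperplanes is automatic; only the newly-added hyperplanes demand verification.

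The main obstacle is therefore the containment at the diagonal hyperplane $\alpha_1+\alpha_2\pm(\cdot)z=0$ at each level (the checks for $\alpha_1\pm(\cdot)z=0$ and $\alpha_2\pm(\cdot)z=0$ reduce to observing that the relevant columns of $M_k$ or $N_{k+1}$ carry those factors visibly). This is precisely where the invariant-theoretic input from~\cite{atsimple} and Terao's matrix $B^{(k)}$ becomes essential: the strategy is to identify the inductively produced pair $(\eta_1,\eta_2)$ with the simple-root bases of~\cite{atsimple}, up to the constant change-of-coordinates $A^{-1}$ (which converts between the $\alpha_i$-basis and the basis dual to the primitive $W$-invariants) and the scalar normalization $T_k=\operatorname{diag}(1/(3k+1),1/(3k+2))$ (whose denominators are the $A_2$-exponents $3k+1,3k+2$ that record the degree shifts produced by $B^{(k)}$). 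Once this identification is in place, the divisibility at the diagonal root is inherited from the intrinsic properties of the simple-root basis, via Ziegler restriction onto the infinite hyperplane $z=0$ and the resulting invariant-theoretic recursion. Concretely, I would first prove an intermediate statement (Theorem~\ref{cons}) that establishes the recursion in coordinates adapted to a convenient primitive derivation, and then transport the resulting formula back to the $\alpha_i$-coordinates via $A^{-1}$ to obtain Theorem~\ref{main}. The Weyl-invariance of the Catalan basis finally follows from the $W$-symmetric placement of $\alpha_1,\alpha_2$ in $M_k$ combined with the $W$-stability of $\C^k$.
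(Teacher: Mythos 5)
Your proposal does not prove the statement in question. The statement is Proposition \ref{ziegler} --- Ziegler's general theorem that for \emph{any} nonempty free central arrangement $\A$ with $\exp(\A)=(1,d_2,\ldots,d_\ell)$ and any fixed $H\in\A$, the module $D(\A^H,m_H)$ is free with exponents $(d_2,\ldots,d_\ell)$ and the restriction map $\res\colon D_0(\A)\to D(\A^H,m_H)$ is surjective. What you have written is instead an outline of Theorem \ref{main}, the paper's main result on the $A_2$ Catalan and Shi arrangements. None of the machinery you invoke (the filtration $\S^k\subset\C^k\subset\S^{k+1}$, simple-root bases, Terao's matrix $B^{(k)}$, $W$-invariance) is even applicable to Proposition \ref{ziegler}, which concerns an arbitrary central arrangement over an arbitrary field, with no root system or Weyl group in sight; and your induction in fact \emph{uses} consequences of Ziegler-type restriction (via \cite{atsimple} and \cite{ycharacterization}), so as an argument for the proposition it would be circular. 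The paper gives no proof either --- it cites \cite{zmultiarrangements} --- but a correct blind proof must engage the general statement.

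For the record, the standard argument runs as follows. Choose coordinates with $\alpha_H=x_1$. Since $1\in\exp(\A)$ via the Euler derivation, $D_0(\A)$ is free with a homogeneous basis $\theta_2,\ldots,\theta_\ell$ of degrees $d_2,\ldots,d_\ell$, and $\theta_E,\theta_2,\ldots,\theta_\ell$ is a basis for $D(\A)$. For $\theta\in D_0(\A)$ and $X\in\A^H$, every $L\in\A\setminus\{H\}$ with $L\cap H=X$ has $\alpha_L=\alpha_X+c_Lx_1$, so $\theta(\alpha_X)=\theta(\alpha_L)\in\alpha_LS$; since the $\alpha_L$ are pairwise coprime, $\theta(\alpha_X)\in\bigl(\prod_L\alpha_L\bigr)S$, and restricting to $x_1=0$ gives $\res(\theta)(\overline{\alpha}_X)\in\overline{\alpha}_X^{\,m_H(X)}\,\overline{S}$, i.e.\ $\varphi_i:=\res(\theta_i)\in D(\A^H,m_H)$. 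Since $\theta_i(x_1)=0$ for $i\geq 2$, expansion of the Saito matrix along its first column yields
\begin{equation*}
x_1\det\bigl(\theta_i(x_j)\bigr)_{2\leq i,j\leq\ell}=c\prod_{K\in\A}\alpha_K,
\qquad\text{hence}\qquad
\det\bigl(\varphi_i(\overline{x}_j)\bigr)=c\prod_{X\in\A^H}\overline{\alpha}_X^{\,m_H(X)}
\end{equation*}
with $c\neq 0$, so by Ziegler's multiarrangement version of Saito's criterion (Proposition \ref{saito} does not suffice as stated; one needs its multiarrangement analogue from \cite{zmultiarrangements}) the $\varphi_2,\ldots,\varphi_\ell$ form a basis for $D(\A^H,m_H)$ with $\deg\varphi_i=d_i$. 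Surjectivity of $\res$ is then immediate: any element of $D(\A^H,m_H)$ is an $\overline{S}$-combination of the $\varphi_i$, and lifting the coefficients to $S$ produces a preimage in $D_0(\A)$.
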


For the rest of this section,
let $E$ be an $\ell$-dimensional Euclidean space,
and we recall the simple-root bases introduced in \cite{atsimple}. 
Let $W$ be a finite irreducible reflection group corresponding to
an irreducible root system $\Phi$ in $E^{*}$. Then by the famous 
theorem of Chevalley, there are 
homogeneos basic invariants $P_{1},\ldots ,P_{\ell}$ 
generating the $W$-invariant ring $S^{W}$ of $S$ as $\mathbb{R}$-algebra such that
\[
\deg P_{1} < \deg P_{2} \leq \cdots \leq \deg P_{\ell -1} < \deg P_{\ell}.
\]
Let $F$ be the quotient field of
$S$.
Then the \textbf{primitive derivation} 
$D = \frac{\partial}{\partial P_{\ell}} \in \Der (F)$ 
is characterized by 
\[
D(P_{i})=
\begin{cases}
c \in \mathbb{R}^{\times} & (i=\ell) \\
0 & (1 \leq i \leq \ell -1)\ .
\end{cases}
\]
The primitive derivation $D$ is $W$-invariant and uniquely determined up to
nonzero constant multiple $c$ independent of the choice of 
the basic invariants.
Define an affine connection 
$\nabla :\Der (F) \times \Der (F) \rightarrow \Der (F)$ by
\[
\nabla_{\theta_{1}} \theta_{2} 
= \sum_{i=1}^{\ell} \theta_{1} (f_{i}) 
\frac{\partial}{\partial x_{i}} 
\]
for
$\theta_{1},\theta_{2} \in \Der (F)$
with
$\theta_{2} = \sum_{i=1}^{\ell} f_{i} 
\frac{\partial}{\partial x_{i}}$.
For $m \in \mathbb{Z}_{> 0}$, define an $S$-module $D(\A_{\Phi},m)$ by
\[
D(\A_{\Phi},m) = 
\{ \theta \in \Der (S) \mid 
\theta (\alpha_{H}) \in \alpha_{H}^{m} S \text{ for any } H \in \A_{\Phi} \}.
\]

Note that the action of $W$ onto $E$ canonically extends to those onto 
$E^*,\ S,\ \Der (S)$ and $D(\A_{\Phi},m)$. Let $D(\A_{\Phi},m)^W$ denote 
the $W$-invariant set of $D(\A_\Phi,m)$. 

\begin{lemma}\label{nabla}
{\bf (\cite{yprimitive}, Lemma 9)}
For the derivations 
$\frac{\partial}{\partial P_{i}} \in \Der (S^{W})\ (1\leq i \leq \ell)$,
\[
\nabla_{\frac{\partial}{\partial P_{i}}}
D(\A_{\Phi},2k+1)^{W} \subset D(\A_{\Phi},2k-1)^{W}
\ \ (k>0).
\]
\end{lemma}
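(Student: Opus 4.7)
The plan is to argue by induction on $k$, proving both statements simultaneously along a zigzag
\[
\S^0 \subset \C^0 \subset \S^1 \subset \C^1 \subset \cdots
\]
of free arrangements. By Yoshinaga's theorem the exponents are $(1,3k,3k)$ for $\S^k$ and $(1,3k+1,3k+2)$ for $\C^k$, and a direct degree count shows that the columns of $\Theta^{(k)}:=[\partial_1,\partial_2]\prod_{i=0}^{k-1}(M_iT_iN_{i+1}A^{-1})$ have degrees $(3k,3k)$ and those of $\Xi^{(k)}:=\Theta^{(k)}M_k$ have degrees $(3k+1,3k+2)$, so appending the Euler derivation $\theta_E$ produces families of the correct total degree. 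By Saito's criterion (Proposition \ref{saito}) it then suffices to check two things at each stage: that the candidate derivations lie in the correct logarithmic module, and that the determinant of the Saito matrix is a nonzero constant multiple of the defining polynomial of the cone.

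The base case $k=0$ is direct: $\Theta^{(0)}=[\partial_1,\partial_2]$ together with $\theta_E$ is the evident basis of $D(\S^0)=D(\{z=0\})$, and $\Xi^{(0)}=[\partial_1,\partial_2]M_0$ together with $\theta_E$ is the classical Saito basis of the cone over the $A_2$ Weyl arrangement, since $\det M_0$ is a nonzero scalar multiple of $\alpha_1\alpha_2(\alpha_1+\alpha_2)$.

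For the inductive step I would analyze the two transitions separately. The Shi-to-Catalan step $\Theta^{(k)}\mapsto\Xi^{(k)}=\Theta^{(k)}M_k$ adjoins to $\S^k$ the three hyperplanes $H_{\alpha,-k}$, whose cone equations are $\alpha_1+kz$, $\alpha_2+kz$ and $(\alpha_1+\alpha_2)+kz$; each new derivation must acquire these three factors after multiplication by $M_k$. Two come from the obvious column factors $(\alpha_i+kz)$ in $M_k$, while the third should emerge from a Koszul-type cancellation using the coefficients $2\alpha_1+4\alpha_2+3kz$ and $4\alpha_1+2\alpha_2+3kz$ in the second column. The Catalan-to-Shi step $\Xi^{(k)}\mapsto\Theta^{(k+1)}=\Xi^{(k)}T_kN_{k+1}A^{-1}$ adjoins the three hyperplanes $H_{\alpha,k+1}$, with cone equations $\alpha_i-(k+1)z$ and $(\alpha_1+\alpha_2)-(k+1)z$; the matrix $N_{k+1}$, defined as the $z\leftrightarrow -z$ conjugate of $M_{k+1}^T$ composed with the swap $\bigl(\begin{smallmatrix}0&1\\1&0\end{smallmatrix}\bigr)$, is designed precisely to implement this sign reversal. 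The rational scaling $T_k=\mathrm{diag}(1/(3k+1),1/(3k+2))$ has denominators matching the Catalan exponents being contracted to the single Shi exponent $3(k+1)$, while $A^{-1}$ converts between the simple-root basis and its $I^*$-dual, rebalancing the two columns.

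Once the divisibility claims are granted, Saito's criterion follows multiplicatively: the determinant of the Saito matrix factors as $z$ (from $\theta_E$, accounting for the hyperplane at infinity) times the determinant of the matrix product, and each of $\det M_i$ and $\det N_i$ can be computed directly and seen to be a nonzero scalar multiple of the product of linear forms defining the hyperplanes introduced at the $i$-th step. The heart of the argument, and its main obstacle, is justifying these divisibility claims at each inductive transition --- equivalently, showing that the matrix products really do land in the correct logarithmic modules rather than merely having the right degrees. This is the content of Theorem \ref{cons}, which I would establish using the simple-root bases of \cite{atsimple} together with Terao's matrix $B^{(k)}$ (\cite{tmultiderivations},\cite{atprimitive}). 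The bridge to invariant theory is Ziegler's restriction (Proposition \ref{ziegler}), which identifies $D_0(\S^k)$ and $D_0(\C^k)$ with the multi-Coxeter modules $D(\A_\Phi,2k)$ and $D(\A_\Phi,2k+1)$, after which Lemma \ref{nabla} and the primitive derivation $D=\partial/\partial P_2$ supply the interlacing that propagates bases between successive values of $k$. The explicit entries of $M_n,N_n,T_n$ then emerge from a direct type-$A_2$ computation with $B^{(k)}$.
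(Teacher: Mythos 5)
Your proposal does not prove the statement it was supposed to prove. The statement is Lemma \ref{nabla}: for the derivations $\frac{\partial}{\partial P_i} \in \Der(S^W)$, the connection $\nabla_{\frac{\partial}{\partial P_i}}$ maps $D(\A_\Phi,2k+1)^W$ into $D(\A_\Phi,2k-1)^W$. This is a general fact about multi-Coxeter arrangements (quoted in the paper from \cite{yprimitive}, Lemma 9, without proof), and a proof of it must engage with two things your text never touches: (i) the $W$-equivariance of $\nabla$ together with the $W$-invariance of $\frac{\partial}{\partial P_i}$, which gives that $\nabla_{\frac{\partial}{\partial P_i}}\theta$ is again $W$-invariant; and (ii) a local order-of-vanishing analysis along each reflecting hyperplane $H$: since $\alpha_H$ is linear, $(\nabla_{\frac{\partial}{\partial P_i}}\theta)(\alpha_H)=\frac{\partial}{\partial P_i}(\theta(\alpha_H))$, and one must show that applying $\frac{\partial}{\partial P_i}$ --- which, written in linear coordinates via the inverse Jacobian, has coefficients with first-order poles along the hyperplanes of $\A_\Phi$ --- drops the order of vanishing of $\theta(\alpha_H)\in\alpha_H^{2k+1}S$ by exactly two, while the $s_H$-invariance of $\theta$ guarantees the result has no pole and lies in $\Der(S)$. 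None of this appears in your proposal.

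What you wrote instead is a proof sketch for Theorem \ref{main} (the inductive zigzag $\S^0\subset\C^0\subset\S^1\subset\cdots$, the matrices $M_i$, $T_i$, $N_{i+1}$, $A^{-1}$, Saito's criterion, and the reduction to Theorem \ref{cons}). That material is broadly consistent with the paper's strategy for the \emph{main theorem}, but it is irrelevant to Lemma \ref{nabla} --- and, worse, your final paragraph explicitly invokes Lemma \ref{nabla} as an ingredient (``Lemma \ref{nabla} and the primitive derivation $D=\partial/\partial P_2$ supply the interlacing''). As a purported proof of Lemma \ref{nabla} this is circular: you assume the very statement you were asked to establish. There is also a smaller inaccuracy in passing: you attribute the identification of $D_0(\S^k)$ with $D(\A_\Phi,2k)$ to Ziegler's Proposition \ref{ziegler}, whereas the surjectivity of the Ziegler restriction for $\S^k$ is Yoshinaga's theorem (\cite{ycharacterization}, quoted as the paper's Proposition 2.7). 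To repair the submission you would need to discard the present text and give the pole-order/invariance argument sketched above, following \cite{yprimitive}.
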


In particular, as shown in \cite{atprimitive}, the connection $\nabla_{D}$ induces an 
$\mathbb{R}[P_{1},\ldots ,P_{\ell-1}]$-isomorphism
\[
\nabla_{D}:D(\A_{\Phi},2k+1)^{W} \stackrel{\sim}{\longrightarrow} D(\A_{\Phi},2k-1)^{W}
\ \ (k>0).
\]
So we can consider the inverse map
\[
\nabla_{D}^{-1}:D(\A_{\Phi},2k-1)^{W} 
\stackrel{\sim}{\longrightarrow} D(\A_{\Phi},2k+1)^{W}.
\]

\begin{prop}
{\bf 
(\cite{yprimitive}, Theorem 7)}
Define $\partial_v\ (v \in E)$ by 
$\partial_v(\alpha):=\alpha (v)$ for $\alpha \in E^*$
which is canonically extended to a derivation 
$\p_{v}: S \rightarrow S$. 
Define $\Xi:E \rightarrow D(\A_{\Phi},2k)$ by
$\Xi (v) = \nabla_{\partial_{v}} \nabla_{D}^{-k} \theta_{E}$.
Then $\Xi$ is a $W$-isomorphism.
\label{atsimple}
\end{prop}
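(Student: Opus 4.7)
The plan is to verify three properties of $\Xi$: (i) well-definedness with image in $D(\A_{\Phi}, 2k)$, (ii) $W$-equivariance, and (iii) the bijectivity encoded in the word ``$W$-isomorphism''—which, in view of the ambient $S$-module structure, I interpret as the statement that for any basis $\{v_{1},\ldots,v_{\ell}\}$ of $E$, the derivations $\Xi(v_{1}),\ldots,\Xi(v_{\ell})$ constitute a free $S$-basis of $D(\A_{\Phi}, 2k)$, thereby identifying $E \cong \Xi(E)$ as $W$-modules.

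For (i), iterating the preceding isomorphism $\nabla_{D}^{-1}: D(\A_{\Phi}, 2j-1)^{W} \stackrel{\sim}{\longrightarrow} D(\A_{\Phi}, 2j+1)^{W}$ starting from $\theta_{E} \in D(\A_{\Phi}, 1)^{W}$ (Euler's identity gives $\theta_{E}(\alpha_{H}) = \alpha_{H}$) yields $\eta := \nabla_{D}^{-k}\theta_{E} \in D(\A_{\Phi}, 2k+1)^{W}$. For each $H \in \A_{\Phi}$, write $\eta(\alpha_{H}) = \alpha_{H}^{2k+1} g_{H}$; the Leibniz rule then gives
\[
(\nabla_{\partial_{v}}\eta)(\alpha_{H}) = \partial_{v}(\eta(\alpha_{H})) = (2k+1)\alpha_{H}^{2k}\alpha_{H}(v)g_{H} + \alpha_{H}^{2k+1}\partial_{v}(g_{H}) \in \alpha_{H}^{2k}S.
\]
For (ii), the coordinate-free description of $\nabla$ gives $w(\nabla_{\theta_{1}}\theta_{2}) = \nabla_{w\theta_{1}}(w\theta_{2})$, which combined with $w \cdot \partial_{v} = \partial_{wv}$ and the $W$-invariance of $\eta$ yields $w \cdot \Xi(v) = \Xi(wv)$.

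The main obstacle is (iii). By Saito's criterion (Proposition \ref{saito}) in its multiarrangement form, it suffices to show
\[
\det\bigl(\Xi(v_{i})(x_{j})\bigr)_{i,j} = c \cdot Q(\A_{\Phi})^{2k}, \qquad c \in \mathbb{R}^{\times},
\]
where $\{x_{j}\}$ is the dual basis of $\{v_{i}\}$. Terao's theorem \cite{tmultiderivations} provides freeness of $D(\A_{\Phi}, 2k)$ of rank $\ell$ with all exponents equal to $kh$, $h$ being the Coxeter number. A degree count gives $\deg \Xi(v) = kh$ (each $\nabla_{D}^{-1}$ raises polynomial degree by $h = \deg P_{\ell}$, and $\nabla_{\partial_{v}}$ lowers it by $1$), so $\deg \det = \ell k h = 2k|\A_{\Phi}| = \deg Q(\A_{\Phi})^{2k}$. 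Divisibility of the determinant by each $\alpha_{H}^{2k}$ follows by a constant change of the $x_{j}$-basis placing $\alpha_{H} = x_{1}$, so that the first column lies in $\alpha_{H}^{2k}S$ by (i); pairwise coprimality of the $\alpha_{H}$ then gives $Q(\A_{\Phi})^{2k} \mid \det$, forcing $\det = c \cdot Q(\A_{\Phi})^{2k}$ for some scalar $c \in \mathbb{R}$.

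The technical heart is showing $c \neq 0$, which I would prove by induction on $k$. The base $k=0$ gives $\Xi^{(0)}(v) = \partial_{v}$ and an identity Saito matrix. For the inductive step, I pass to Saito's flat coordinates $t_{1}, \ldots, t_{\ell}$ (in which $D = \partial/\partial t_{\ell}$ becomes a constant vector field): expressing $\eta$ in these coordinates turns $\nabla_{D}^{-1}$ into antidifferentiation in $t_{\ell}$, so one can track explicitly how the Saito determinant transforms when the exponent level is raised. Combined with the isomorphism $\nabla_{D}: D(\A_{\Phi}, 2k+1)^{W} \stackrel{\sim}{\longrightarrow} D(\A_{\Phi}, 2k-1)^{W}$ and the known freeness, the nondegeneracy propagates from level $k-1$ to level $k$. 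The most delicate issue is controlling the coordinate change between the Euclidean frame carrying $\partial_{v}$ and the flat frame carrying $D$.
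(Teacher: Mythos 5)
First, note that the paper itself gives no proof of this proposition: it is imported verbatim from \cite{yprimitive}, and its mathematical content is essentially Terao's basis construction from \cite{tmultiderivations} stated $W$-equivariantly. Your parts (i) and (ii) are correct and complete: the Leibniz computation $(\nabla_{\partial_v}\eta)(\alpha_H)=\partial_v(\eta(\alpha_H))$ is valid precisely because $\alpha_H$ is linear, and the equivariance $w(\nabla_{\theta_1}\theta_2)=\nabla_{w\theta_1}(w\theta_2)$ holds for the standard flat connection under linear actions. The reduction of (iii) to $\det(\Xi(v_i)(x_j))=c\,Q(\A_{\Phi})^{2k}$ via the multiarrangement Saito--Ziegler criterion, the degree count $\ell kh = 2k|\A_{\Phi}|$, and the divisibility argument are likewise sound.

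The genuine gap is exactly where you flag it: $c\neq 0$. Your sketched induction does not work as described. The claim that in Saito's flat coordinates ``$\nabla_D^{-1}$ becomes antidifferentiation in $t_\ell$'' conflates two different structures: the connection $\nabla$ in this paper differentiates the \emph{coefficients with respect to the Euclidean frame} $\partial/\partial x_i$, and those coefficients are not $W$-invariant, hence are not functions of the flat (invariant) coordinates at all; so $D=\partial/\partial P_\ell$ cannot simply be antidifferentiated against them, and no straightforward tracking of the Saito determinant across levels results. (Flatness of Saito's coordinates refers to the Saito metric, not to the connection $\nabla$ used here.) The actual mechanism that propagates nondegeneracy from level $k$ to $k+1$ is Terao's invariant-theoretic computation: expanding $\nabla_D\nabla_{\partial_i}\nabla_D^{-(k+1)}\theta_E$ against the basic derivations produces the matrix $B^{(k)}=kB+(k-1)B^{T}$, whose invertibility (visible in this paper in Theorem \ref{rest} and the identity $R_{2k}^{-1}R_{2k+2}=J(B^{(k+1)})^{-1}J^{T}A$) is what forces the new Saito determinant to remain a nonzero multiple of $Q^{2k+2}$; this invertibility rests on the Hodge-theoretic structure of the primitive derivation and is the hard core of \cite{tmultiderivations}, not recoverable by coordinate bookkeeping. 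Note also a structural awkwardness in your setup: you cite Terao's theorem for ``freeness with all exponents $kh$,'' but in \cite{tmultiderivations} that statement is proved \emph{by exhibiting precisely the basis} $\nabla_{\partial_i}\nabla_D^{-k}\theta_E$ --- so either you cite the theorem in full, in which case (iii) is immediate and your induction is unnecessary, or you must supply Terao's $B^{(k)}$ argument yourself, which your sketch does not do.
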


\begin{prop}
{\bf (\cite{ycharacterization}, Theorem 1.2)}
Let $D_{0}(\S^{k})=\{ \theta \in D(\S^{k}) \mid \theta(z)=0 \}$.
Then the Ziegler restriction map 
$\res :D_{0}(\S^{k}) \rightarrow D(\A_{\Phi},2k)$
is surjective.
In particular, the morphism 
$\res :D_{0}(\S^{k})_{kh} \rightarrow D(\A_{\Phi},2k)_{kh}$
is an $\mathbb{R}$-linear isomorphism, 
where $D_{0}(\S^{k})_{kh}$ and $D(\A_{\Phi},2k)_{kh}$
are the homogeneous parts of degree $kh$ of 
$D_{0}(\S^{k})$ and $D(\A_{\Phi},2k)$ respectively, 
and $h$ is the Coxeter number of $\Phi$. 
\end{prop}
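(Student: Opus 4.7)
My plan is to derive surjectivity of the Ziegler restriction map by first establishing the freeness of the cone $\S^k$, after which Proposition \ref{ziegler} applies directly. The second assertion, namely the $\mathbb{R}$-linear isomorphism in degree $kh$, will then follow from a dimension count once the exponents are known.

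First I would identify the Ziegler restriction data. For each $\alpha \in \Phi^+$, the $2k$ affine hyperplanes $H_{\alpha, -k+1}, \ldots, H_{\alpha, k}$ become, under coning, the linear hyperplanes $\alpha - iz = 0$ in $U$. Intersecting each of them with $\{z = 0\}$ yields the single linear hyperplane $\alpha = 0$ of the Weyl arrangement $\A_\Phi$. Hence the Ziegler multiplicity at $\alpha = 0$ is exactly $2k$, and the target multiarrangement attached to the distinguished hyperplane $\{z=0\}$ is precisely $(\A_\Phi, 2k)$; this pins down the shape of $\res$ as stated and shows that $\res$ is degree-preserving.

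Next I would assemble the two ingredients cited in the introduction. Terao's theorem on multi-Coxeter arrangements with constant multiplicity gives that $D(\A_\Phi, 2k)$ is a free $S$-module of rank $\ell$ whose exponents are all equal to $kh$ (the sum of exponents being $2k \cdot |\Phi^+| = \ell kh$ by Saito's criterion). Athanasiadis's computation for Shi-type arrangements yields the characteristic polynomial $\chi(\mathrm{Shi}^k, t) = (t - kh)^\ell$, whence $\chi(\S^k, t) = (t-1)(t - kh)^\ell$. Feeding these into Yoshinaga's freeness criterion applied to $\S^k$ with the distinguished hyperplane $\{z = 0\}$: the multi-restriction $(\A_\Phi, 2k)$ is free with exponents $(kh, \ldots, kh)$; these candidate exponents together with the Euler exponent $1$ sum to $1 + k\ell h = |\S^k|$; and they are compatible with the factored characteristic polynomial. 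The criterion therefore forces freeness of $\S^k$ with $\exp(\S^k) = (1, kh, \ldots, kh)$, and consequently $D_0(\S^k)$ is free of rank $\ell$ with basis concentrated in degree $kh$.

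At this point Proposition \ref{ziegler} immediately gives surjectivity of $\res: D_0(\S^k) \to D(\A_\Phi, 2k)$, which is the main assertion. For the second assertion, both $D_0(\S^k)_{kh}$ and $D(\A_\Phi, 2k)_{kh}$ are the $\mathbb{R}$-vector spaces spanned by the basis elements (which have minimal degree $kh$), each of dimension $\ell$. A surjective $\mathbb{R}$-linear map between finite-dimensional vector spaces of equal dimension is an isomorphism, completing the degree-$kh$ claim. The principal obstacle in carrying out this plan is the appeal to Yoshinaga's freeness criterion, which is itself a nontrivial algebro-geometric bridge between freeness of $\S^k$ and freeness of $(\A_\Phi, 2k)$, and which in turn relies on Athanasiadis's finite-field evaluation of $\chi(\mathrm{Shi}^k, t)$; neither step is routine, although both are by now standard tools in the area.
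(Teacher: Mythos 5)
The paper contains no proof of this proposition: it is imported verbatim from Yoshinaga \cite{ycharacterization}, so your proposal can only be measured against the literature proof, whose architecture it largely reproduces. The parts you carry out in detail are correct: the Ziegler restriction of $\S^{k}$ onto $\{z=0\}$ is indeed $(\A_{\Phi},2k)$, since the $2k$ cones of $H_{\alpha,i}$ ($-k+1\leq i\leq k$) are exactly the hyperplanes of $\S^{k}\setminus\{z=0\}$ meeting $\{z=0\}$ in $\{\alpha=0\}$ (roots being pairwise non-proportional); Terao's theorem \cite{tmultiderivations} does give $\exp(\A_{\Phi},2k)=(kh,\ldots,kh)$; once $\S^{k}$ is known to be free with $\exp(\S^{k})=(1,kh,\ldots,kh)$, Proposition \ref{ziegler} yields surjectivity of $\res$; and your degree-$kh$ step is sound, because $\res$ is a graded map, a surjective graded map is surjective in each degree, and both $D_{0}(\S^{k})_{kh}$ and $D(\A_{\Phi},2k)_{kh}$ are $\ell$-dimensional since all non-Euler generators sit in degree exactly $kh$. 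Note that your logical direction is reversed relative to \cite{ycharacterization}, where surjectivity of $\res$ is in effect the engine producing freeness of $\S^{k}$, whereas you deduce surjectivity from freeness; this is not circular provided your freeness input does not itself presuppose the surjectivity.

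That freeness input is where the one genuine gap lies. The criterion you invoke --- Ziegler restriction free with exponents $(e_{1},\ldots,e_{\ell})$ plus $\chi(\S^{k},t)=(t-1)\prod_{i}(t-e_{i})$ implies freeness, with no further hypothesis --- is a theorem of \cite{ycharacterization} only for $3$-arrangements, i.e.\ for $\ell=2$. That suffices for the $A_{2}$ case this paper actually uses (where $\chi(\mathrm{Shi}^{k},t)=(t-3k)^{2}$ is an elementary computation), but the proposition is stated for an arbitrary irreducible $\Phi$, and in rank $\geq 3$ the criterion of \cite{ycharacterization} carries an additional hypothesis: local freeness of $\S^{k}$ along the infinite hyperplane, which Yoshinaga verifies by induction on the rank, the localizations at nonzero points of $\{z=0\}$ being products of cones over extended Shi arrangements of parabolic root subsystems. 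The purely numerical version you state --- restriction free plus matching characteristic polynomial (equivalently, equality of second Betti numbers), with the local-freeness hypothesis removed --- is a genuinely later theorem of Abe and Yoshinaga, not among the tools cited in this paper or in \cite{ycharacterization}. So as written your argument is complete for $\ell=2$ but has a hole for general $\Phi$; it closes either by adding the local-freeness induction along $\{z=0\}$ (Yoshinaga's original route) or by explicitly invoking the later $b_{2}$-equality criterion, and with the former repair your reliance on Athanasiadis's characteristic polynomial computation can also be reduced.
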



\begin{define}
{\bf (\cite{atsimple}, Definition 1.2)}
Fix $k \in \mathbb{Z}_{\geq 0}$.
Define a linear isomorphism $\Theta :E \rightarrow D_{0}(\S^{k})$
by $\Theta=\res^{-1} \circ \Xi$.
Let $\{ \alpha_{1},\ldots ,\alpha_{\ell} \} \subset E^{*}$ 
be a simple system of $\Phi^+$ and 
$\{ \alpha_{1}^{*},\ldots ,\alpha_{\ell}^{*} \} \subset E$
be its dual basis. 
Then the derivations 
\[
\varphi_{i}^{(k)} = \Theta(\alpha_{i}^{*})
\ \ (1\leq i\leq \ell )
\]
are called a \textbf{simple-root basis plus} $(\mathbf{SRB}_{+})$
of $D_{0}(\S^{k})$ and the derivations
\[
\psi_{i}^{(k)} 
= \sum_{p=1}^{\ell} I^{*}(\alpha_{i},\alpha_{p}) \varphi_{p}^{(k)}
\ \ (1\leq i\leq \ell )
\]
are called a \textbf{simple-root basis minus} $(\mathbf{SRB}_{-})$
of $D_{0}(\S^{k})$.
Here $I^{*}$ is the natural inner product on $E^{*}$
induced from the inner product $I$ on $E$.
\end{define}

\begin{rem}\label{I^{*}}
Let $\Omega_{F}$ denote the dual $S$-module of $\Der (F)$.
The inner product 
$I^{*}:E^{*} \times E^{*} \rightarrow \R$
can be extended to a nondegenerate $F$-bilinear form
$I^{*}:\Omega_{F} \times \Omega_{F} \rightarrow F$.
Define an $F$-linear isomorphism $I^{*}:\Omega_{F} \rightarrow \Der (F)$
by $I^{*}(\omega)(f) := I^{*}(\omega ,df)$ 
for $\omega \in \Omega_{F}, f \in F$.
Then the restriction of $\sm$ $\res (\psi_{i}^{(k)})$ 
can be expressed as follows:
\begin{align*}
\res (\psi_{i}^{(k)})
&= \sum_{p=1}^{\ell} I^{*}(\alpha_{i},\alpha_{p})
\nabla_{\p_{\alpha_{p}^{*}}} \nabla_{D}^{-k} \theta_{E} \\
&= \nabla_{\sum_{p=1}^{\ell} I^{*}(\alpha_{i},\alpha_{p}) \p_{\alpha_{p}^{*}}} \nabla_{D}^{-k} \theta_{E} \\
&= \nabla_{I^{*}(d \alpha_{i})} \nabla_{D}^{-k} \theta_{E}.
\end{align*}
\end{rem}

\begin{rem}\label{innermatrix}
Let $A=[I^{*}(\alpha_{i},\alpha_{j})]_{1\leq i,j \leq \ell}$
be the inner product matrix. Then by definition, 
an $\sp$ $\{ \varphi_{1}^{(k)}, \ldots ,\varphi_{\ell}^{(k)} \}$ and 
an $\sm$ $\{ \psi_{1}^{(k)}, \ldots ,\psi_{\ell}^{(k)} \}$ 
are related as follows:
\[
[\varphi_{1}^{(k)}, \ldots ,\varphi_{\ell}^{(k)}]
=[\psi_{1}^{(k)}, \ldots ,\psi_{\ell}^{(k)}] A^{-1}.
\]
\end{rem}

It follows from Schur's lemma that these bases are
uniquely determined if we fix a simple system and 
a primitive derivation $D$.
These bases can be characterized by the following conditions:

\begin{prop}\label{charsrb}
{\bf (\cite{atsimple}, Theorem 1.3)}

$(1)$
Let $\varphi_{1}^{(k)},\ldots ,\varphi_{\ell}^{(k)}$ be an $\sp$ 
of $D_{0}(\S^{k})$.
Then $\varphi_{1}^{(k)},\ldots ,\varphi_{\ell}^{(k)}$ satisfy
\[
\varphi_{i}^{(k)}(\alpha_{j}+kz) \in (\alpha_{j}+kz)S_{z}
\ \ (i \neq j).
\]

$(2)$ 
Let $\psi_{1}^{(k)},\ldots ,\psi_{\ell}^{(k)}$ be an $\sm$
of $D_{0}(\S^{k})$.
Then $\psi_{1}^{(k)},\ldots ,\psi_{\ell}^{(k)}$ satisfy
\[
\psi_{i}^{(k)} \in (\alpha_{i}-kz)\Der (S_{z})
\ \ (1 \leq i \leq \ell ).
\]
\end{prop}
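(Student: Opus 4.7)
The plan is to prove by induction on $k$ the stronger statement that the two columns of
$\Psi_k := [\partial_1,\partial_2]\prod_{i=0}^{k-1}(M_i T_i N_{i+1}A^{-1})$
form an $\sp$ of $D_0(\S^k)$; once this is in hand, Theorem \ref{main} will follow because the Euler derivation always accompanies a basis of $D_0(\S^k)$ to give one of $D(\S^k)$, while right-multiplication by $M_k$ will simultaneously be shown to convert the $\sp$ at level $k$ into a $W$-invariant basis of $D(\C^k)$. The base case $k=0$ is immediate: the empty product is the identity, $\S^0$ is the cone of the empty arrangement, and $[\partial_1,\partial_2]$ satisfies the divisibility condition of Proposition \ref{charsrb}(1) vacuously.

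For the inductive step one must first check that $\Psi_k M_k$ is a $W$-invariant basis of $D(\C^k)$. The columns of $M_k$ are designed so that the first column reproduces the Euler-shifted derivation $(\alpha_1+kz)\partial_1+(\alpha_2+kz)\partial_2$ (of degree $1$) modulo the inductive SRB$_+$ structure, and the second column produces a derivation of degree $3k+2$ whose image on each positive root $\alpha\in\{\alpha_1,\alpha_2,\alpha_1+\alpha_2\}$ is divisible by the newly added Catalan factor $(\alpha+kz)$; the divisibility by the other factors $(\alpha\pm iz)$ for $i<k$ is inherited from $\Psi_k\in D_0(\S^k)$. Saito's criterion (Proposition \ref{saito}) then reduces the verification to computing the determinant of $\Psi_k M_k$ (as a matrix of coefficients in $\partial_1,\partial_2$) and matching it with $Q(\C^k)/z$, which follows from $\det M_k = \text{(nonzero scalar)}\cdot\prod_{\alpha\in\Phi^+}(\alpha+kz)$. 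The $W$-invariance is inherited from the $W$-equivariance of the $\sp$ with respect to the $W$-action on $E$ dual to that on $E^\ast$, transported through the matrix structure of $M_k$ (whose entries are built from simple roots and the Coxeter number $h=3$ of $A_2$).

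Next, to pass from $\Psi_k M_k$ back to an $\sp$ of $D_0(\S^{k+1})$, one exploits the relation $N_{k+1}=\bigl(\begin{smallmatrix}0&1\\1&0\end{smallmatrix}\bigr)M_{k+1}^T\bigr|_{z\to-z}$: geometrically, $\S^{k+1}$ adds the hyperplanes $\alpha=-(k+1)z$ to $\C^k$, and the symmetry $z\mapsto -z$ exchanges these new hyperplanes with the hyperplanes $\alpha=(k+1)z$ already seen on the Catalan side. By Remark \ref{I^{*}} the $\sm$ at level $k+1$ equals $\nabla_{I^\ast(d\alpha_i)}\nabla_D^{-(k+1)}\theta_E$; the claim is that right-multiplication of $\Psi_k M_k$ by $T_k N_{k+1}$ realizes one application of $\nabla_D^{-1}$ on the SRB$_\pm$ graded piece, producing a family of derivations satisfying the divisibility of Proposition \ref{charsrb}(2) with $k$ replaced by $k+1$, hence an $\sm$. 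The diagonal matrix $T_k=\mathrm{diag}(1/(3k+1),1/(3k+2))$ records the eigenvalues of $\nabla_D^{-1}$ on the graded components of degrees $3k+1,3k+2$ (this is the role of Terao's matrix $B^{(k)}$ from \cite{tmultiderivations, atprimitive} specialized to $A_2$). Finally the factor $A^{-1}$ converts the $\sm$ into the $\sp$ by Remark \ref{innermatrix}, closing the induction.

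The main obstacle will be the explicit identification of the matrix $T_k N_{k+1}$ with the action of $\nabla_D^{-1}$ on the relevant graded piece; this is precisely the invariant-theoretic computation that forces the restriction to type $A_2$, since in higher rank the basic invariants and the primitive derivation no longer admit sufficiently tractable closed forms. I would handle this step in a separate preparatory theorem (the Theorem \ref{cons} mentioned in the introduction), computing $\nabla_D^{-1}$ on $\sp$/$\sm$ directly in the $A_2$ coordinates $\alpha_1,\alpha_2,z$ and then reading off $T_k$ and $N_{k+1}$; with that in hand, the induction of the previous paragraph and Saito's criterion deliver both claims of Theorem \ref{main}, the degree count $(1,3k+1,3k+2)$ for $\exp(\S^k)$ and $(1,3k+2,3k+4)$ for $\exp(\C^k)$ matching the known exponents from \cite{ycharacterization}.
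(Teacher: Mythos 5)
Your proposal does not prove the statement it was assigned; it proves (at best) a different one, and circularly at that. Proposition \ref{charsrb} is a general fact, valid for every irreducible root system $\Phi$ and every rank $\ell$, about the intrinsically defined simple-root bases $\varphi_{i}^{(k)}=\Theta(\alpha_{i}^{*})$, $\Theta=\res^{-1}\circ\Xi$; in this paper it carries no internal proof but is imported from \cite{atsimple} (Theorem 1.3). What you sketch instead is the $A_{2}$-specific induction behind Theorem \ref{main}, and at the decisive step you say the derivations produced by $T_{k}N_{k+1}$ satisfy ``the divisibility of Proposition \ref{charsrb}(2) with $k$ replaced by $k+1$, hence an $\mathrm{SRB}_{-}$.'' That move treats the divisibility conditions as a \emph{criterion} for being a simple-root basis, i.e., it presupposes Proposition \ref{charsrb} --- indeed its converse, the characterization direction, which the statement as printed here does not even assert (the paper claims only that $\mathrm{SRB}_{\pm}$ satisfy the conditions, with uniqueness supplied separately via Schur's lemma). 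A genuine proof must show that $\psi_{i}^{(k)}$, whose Ziegler restriction is $\nabla_{I^{*}(d\alpha_{i})}\nabla_{D}^{-k}\theta_{E}$, acquires the extra vanishing along $\alpha_{i}-kz$, and that $\varphi_{i}^{(k)}(\alpha_{j}+kz)$ is divisible by $\alpha_{j}+kz$ for $i\neq j$; in \cite{atsimple} this is extracted from the reflection (anti-)invariance of these derivations under $s_{i}$ (cf.\ Proposition \ref{reflection}), the $\nabla$-calculus on $D(\A_{\Phi},2k)^{W}$ (Lemma \ref{nabla}, Proposition \ref{atsimple}), and a lift through the surjective Ziegler restriction $\res:D_{0}(\S^{k})\rightarrow D(\A_{\Phi},2k)$ using the known exponents of $\S^{k}$ and its deletions. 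None of this machinery appears in your text, and an explicit $A_{2}$ basis construction cannot substitute for it: in this paper's logic, Theorem \ref{main} sits strictly downstream of Proposition \ref{charsrb} (Propositions \ref{srb+} and \ref{srb-} both invoke it at their first step), not the other way around.

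Even judged as a sketch of Theorem \ref{main}, the proposal has defects. The exponents at the end are wrong: $\exp(\S^{k})=(1,3k,3k)$ and $\exp(\C^{k})=(1,3k+1,3k+2)$, not $(1,3k+1,3k+2)$ and $(1,3k+2,3k+4)$. Also, $\nabla_{I^{*}(d\alpha_{i})}\nabla_{D}^{-(k+1)}\theta_{E}$ is the restriction $\res(\psi_{i}^{(k+1)})$, not $\psi_{i}^{(k+1)}$ itself, and identifying $T_{k}N_{k+1}$ with ``one application of $\nabla_{D}^{-1}$,'' with $T_{k}$ as its eigenvalues, is a heuristic rather than an argument. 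The paper's actual route is different and concrete: show that $[\varphi_{1}^{(k)},\varphi_{2}^{(k)}]M_{k}$ and $[\psi_{1}^{(k+1)},\psi_{2}^{(k+1)}]N_{k+1}^{-1}$ are each $W$-invariant bases for $D_{0}(\C^{k})$ of degrees $(3k+1,3k+2)$ (Propositions \ref{srb+} and \ref{srb-}), deduce the existence of a polynomial transition matrix $T_{k}$, kill its off-diagonal entries by degree counting together with $\tau$-invariance (Proposition \ref{tauinv}), and evaluate the diagonal at $z=0$ via Theorem \ref{rest} and the identity $R_{2k}^{-1}R_{2k+2}=J(B^{(k+1)})^{-1}J^{T}A$ (Proposition \ref{t}). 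If you want a blind proof of Proposition \ref{charsrb} itself, you must reconstruct the Abe--Terao argument; restricting to type $A_{2}$ and exhibiting matrices does not do it.
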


Now we introduce some propositions concerning 
the action of $W$ to these bases.

\begin{prop}\label{keuler}
{\bf (\cite{atsimple}, Theorem 4.3)}
The derivation
\[
\sum_{i=1}^{\ell} (\alpha_{i}+kz) \varphi_{i}^{(k)}
\]
is called the \textbf{k-Euler derivation}.
The $k$-Euler derivation is $W$-invariant and
belongs to $D_{0}(\C^{k})_{kh+1}$.
\end{prop}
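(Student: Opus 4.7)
My plan proceeds in three logical steps, with the Ziegler restriction as the central tool for promoting the manifestly $W$-invariant restriction of $\theta_k := \sum_{i=1}^{\ell}(\alpha_i + kz)\varphi_i^{(k)}$ into $W$-invariance of $\theta_k$ itself.

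First, the degree and Shi-membership are immediate: each $\varphi_i^{(k)}$ is homogeneous of degree $kh$ (the nontrivial exponent of $\mathbf{c}\mathrm{Shi}^k$), so $\theta_k$ has degree $kh+1$ and lies in $D_0(\mathbf{c}\mathrm{Shi}^k)$. At a simple root $\alpha_j$, the Catalan condition $\theta_k(\alpha_j+kz)\in(\alpha_j+kz)S_z$ follows by combining Proposition~\ref{charsrb}(1) on the $i\neq j$ summands with the explicit factor $(\alpha_j+kz)$ in the $i=j$ summand.

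For the $W$-invariance I would first Ziegler-restrict. Using $\res\varphi_i^{(k)}=\Xi(\alpha_i^*)=\nabla_{\partial_{\alpha_i^*}}\nabla_D^{-k}\theta_E$, the vanishing $\res(kz)=0$, the $S$-linearity of $\nabla$ in its first argument, and the identity $\theta_E=\sum_i\alpha_i\partial_{\alpha_i^*}$, a direct computation gives
\[
\res\theta_k=\nabla_{\theta_E}\nabla_D^{-k}\theta_E=(kh+1)\,\nabla_D^{-k}\theta_E,
\]
the last equality since $\nabla_{\theta_E}\eta=(\deg\eta)\eta$ on homogeneous $\eta$ and $\nabla_D^{-k}\theta_E$ has degree $kh+1$; this element is $W$-invariant because $\theta_E$ and $D$ are. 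Since $\exp(\mathbf{c}\mathrm{Cat}^k)=(1,kh+e_1,\dots,kh+e_\ell)$ with $e_1=1$ of multiplicity one, $D_0(\mathbf{c}\mathrm{Cat}^k)_{kh+1}$ is one-dimensional over $\mathbb{R}$ and $\res$ restricts to a nonzero $W$-equivariant isomorphism from this line onto $\mathbb{R}\cdot\nabla_D^{-k}\theta_E$; averaging any lift over $W$ produces a unique $W$-invariant $\theta^*\in D_0(\mathbf{c}\mathrm{Cat}^k)_{kh+1}$ with $\res\theta^*=\res\theta_k$.

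Finally I would show $\theta_k=\theta^*$. Their difference lies in $\ker(\res)\cap D_0(\mathbf{c}\mathrm{Shi}^k)_{kh+1}=z\cdot\mathrm{span}_{\mathbb{R}}\{\varphi_1^{(k)},\dots,\varphi_\ell^{(k)}\}$, so $\theta_k-\theta^*=z\sum_i c_i\varphi_i^{(k)}$; applying the difference to $\alpha_j+kz$ and exploiting the common simple-root Catalan divisibility of both $\theta_k$ (from the first step) and $\theta^*$ (since $\theta^*\in D_0(\mathbf{c}\mathrm{Cat}^k)$), together with the Shi-divisibility factorization $\varphi_j^{(k)}(\alpha_j)=\prod_{i=-k+1}^{k}(\alpha_j-iz)\cdot g_j$ coming from $\varphi_j^{(k)}\in D_0(\mathbf{c}\mathrm{Shi}^k)$, reduces to the equation $c_j\,g_j|_{\alpha_j=-kz}=0$. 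The main obstacle is establishing $g_j|_{\alpha_j=-kz}\ne 0$—equivalently that no individual $\varphi_j^{(k)}$ already lies in $D_0(\mathbf{c}\mathrm{Cat}^k)$—which I would extract from the defining construction $\varphi_j^{(k)}=\res^{-1}\circ\Xi(\alpha_j^*)$ together with a generic-position argument on the $S_z$-basis of $D_0(\mathbf{c}\mathrm{Shi}^k)$ versus $D_0(\mathbf{c}\mathrm{Cat}^k)$. Once $c_j=0$ for all $j$, we have $\theta_k=\theta^*$, so $\theta_k$ is $W$-invariant and lies in $D_0(\mathbf{c}\mathrm{Cat}^k)_{kh+1}$; Catalan divisibility at every non-simple $\alpha=w\alpha_j$ then follows from the orbit identity $\theta_k(\alpha+kz)=w\theta_k(\alpha_j+kz)\in w((\alpha_j+kz)S_z)=(\alpha+kz)S_z$, closing the proof.
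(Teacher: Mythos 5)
You are attempting to prove a statement that this paper itself does not prove: Proposition \ref{keuler} is quoted from \cite{atsimple} (Theorem 4.3), so the only comparison available is with the argument in that source, which is a direct reflection computation rather than your restriction-and-lift scheme. Your first two steps are sound: the degree count, the membership $\theta:=\sum_i(\alpha_i+kz)\varphi_i^{(k)}\in D_0(\S^k)$, the divisibility $\theta(\alpha_j+kz)\in(\alpha_j+kz)S_z$ at simple roots via Proposition \ref{charsrb}(1), the computation $\res(\theta)=\nabla_{\theta_E}\nabla_D^{-k}\theta_E=(kh+1)\nabla_D^{-k}\theta_E$, and the existence of a unique $W$-invariant $\theta^*\in D_0(\C^k)_{kh+1}$ with the same Ziegler restriction (using freeness of $\C^k$ with exponents $(1,kh+e_1,\ldots,kh+e_\ell)$, $e_1=1$ simple, and Proposition \ref{ziegler}) are all correct and justifiable from results quoted in the paper. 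But your step 3 has a genuine gap: everything reduces to the nonvanishing $g_j|_{\alpha_j=-kz}\neq 0$, i.e.\ to $\varphi_j^{(k)}(\alpha_j+kz)\notin(\alpha_j+kz)S_z$, and for this you offer only an unspecified ``generic-position argument''; nothing in the paper (or in the quoted propositions) delivers it, and it is exactly the crux, since if some $g_j$ did vanish on $\alpha_j=-kz$ the coefficient $c_j$ could not be killed and $\theta=\theta^*$ would not follow. Moreover your claimed equivalence is wrong in the needed direction: $\varphi_j^{(k)}\notin D_0(\C^k)$ only says the Catalan divisibility fails at \emph{some} hyperplane $H_{\alpha,-k}$, whereas you need failure at the specific hyperplane $\alpha_j+kz=0$; so even a proof of the statement you propose to ``extract from the defining construction'' would be strictly weaker than what your argument requires.

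The gap is avoidable, and the cited source's route shows how: $W$-invariance can be proved directly from Proposition \ref{reflection} together with Remark \ref{innermatrix}, with no lifting argument at all. Indeed, Proposition \ref{reflection}(2) gives $s_i\psi_i^{(k)}=-\frac{\alpha_i+kz}{\alpha_i-kz}\,\psi_i^{(k)}$; substituting $\psi_i^{(k)}=\sum_p I^*(\alpha_i,\alpha_p)\varphi_p^{(k)}$ and using \ref{reflection}(1) yields
\[
s_i\varphi_i^{(k)}=\varphi_i^{(k)}-\frac{2\alpha_i}{I^*(\alpha_i,\alpha_i)(\alpha_i-kz)}\,\psi_i^{(k)},
\]
and then, using $s_i\alpha_j=\alpha_j-\frac{2I^*(\alpha_j,\alpha_i)}{I^*(\alpha_i,\alpha_i)}\alpha_i$ and the symmetry of $I^*$, one computes
\[
s_i\theta-\theta
=-\frac{2\alpha_i}{I^*(\alpha_i,\alpha_i)}\sum_{j}I^*(\alpha_i,\alpha_j)\varphi_j^{(k)}
+\frac{2\alpha_i}{I^*(\alpha_i,\alpha_i)}\,\psi_i^{(k)}=0 .
\]
Once $W$-invariance is in hand, your own step 1 combined with your closing orbit argument ($\theta(w(\alpha_j)+kz)=w(\theta(\alpha_j+kz))$) yields $\theta\in D_0(\C^k)_{kh+1}$, and your steps 2--3 become unnecessary. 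This reflection-based computation is also stylistically the one this paper itself relies on (compare the proofs of Propositions \ref{srb+}, \ref{srb-} and \ref{tauinv}), so it is the natural way to close your argument.
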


\begin{prop}\label{reflection}
{\bf (\cite{atsimple}, Theorem 3.5)}
Let $s_{i} \in W$ be the reflection corresponding to $\alpha_{i}$
for $1 \leq i \leq \ell$.
Then 
\begin{enumerate}
\item $s_{i} \varphi_{j}^{(k)}=\varphi_{j}^{(k)}$ whenever $i \neq j$, and 
\item $s_{i} \left( \dfrac{\psi_{i}^{(k)}}{(\alpha_{i}-kz)} \right)
=\dfrac{\psi_{i}^{(k)}}{(\alpha_{i}-kz)}$\ \ for $1 \leq i \leq \ell$.
\end{enumerate}
\end{prop}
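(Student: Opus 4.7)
The plan is to deduce both parts from three inputs: the $W$-equivariance of $\Xi$ (Proposition \ref{atsimple}), the injectivity of the Ziegler restriction $\res$ on $D_0(\S^k)_{kh}$, and the divisibility properties in Proposition \ref{charsrb}. The subtlety is that $\S^k$ is not $W$-stable: the reflection $s_i$ swaps the cone of $H_{\alpha_i,k}$ and the cone of $H_{\alpha_i,-k}$ while permuting the remaining hyperplanes among themselves, so $s_i$ does not a priori preserve $D_0(\S^k)$. The extra logarithmic divisibility in Proposition \ref{charsrb} is precisely what compensates for this.

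For (1), fix $j \neq i$. Proposition \ref{charsrb}(1) gives $\varphi_j^{(k)}(\alpha_i + kz) \in (\alpha_i + kz)S_z$, so $\varphi_j^{(k)}$ is logarithmic along the cones of both $H_{\alpha_i,\pm k}$ in addition to the rest of $\S^k$. Since $s_i$ swaps these two cones and permutes all other hyperplanes of $\S^k$ bijectively, $s_i\varphi_j^{(k)}$ remains logarithmic along $\S^k$ and thus lies in $D_0(\S^k)_{kh}$. Because $\alpha_i(\alpha_j^*) = 0$ forces $s_i\alpha_j^* = \alpha_j^*$, the $W$-equivariance of $\Xi$ yields $\res(s_i\varphi_j^{(k)}) = \Xi(s_i\alpha_j^*) = \Xi(\alpha_j^*) = \res(\varphi_j^{(k)})$, and injectivity of $\res$ on $D_0(\S^k)_{kh}$ concludes $s_i\varphi_j^{(k)} = \varphi_j^{(k)}$.

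For (2), use Proposition \ref{charsrb}(2) to write $\psi_i^{(k)} = (\alpha_i - kz)\eta_i$ with $\eta_i \in \Der(S_z)$; the claim is equivalent to $s_i\eta_i = \eta_i$. A direct division argument shows that $\eta_i$ is logarithmic along every hyperplane of $\S^k$ except the cone of $H_{\alpha_i,k}$, and this reduced collection of hyperplanes is $s_i$-stable (the $\alpha_i$-family $\{H_{\alpha_i,r}: -k+1 \le r \le k-1\}$ is already symmetric under $r \mapsto -r$). Consequently $\tilde{\psi}_i := (\alpha_i - kz)s_i\eta_i$ lies in $D_0(\S^k)_{kh} \cap (\alpha_i - kz)\Der(S_z)$. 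On the restriction side, $\psi_i^{(k)} = \Theta(\alpha_i^\sharp)$ with $\alpha_i^\sharp := \sum_p I^*(\alpha_i,\alpha_p)\alpha_p^* \in E$, and $s_i\alpha_i^\sharp = -\alpha_i^\sharp$ since $\alpha_i^\vee = 2\alpha_i^\sharp/I^*(\alpha_i,\alpha_i)$; the $W$-equivariance of $\Xi$ then gives $s_i\res(\psi_i^{(k)}) = -\res(\psi_i^{(k)})$. Rewriting $\tilde{\psi}_i = -\frac{\alpha_i - kz}{\alpha_i + kz}\,s_i\psi_i^{(k)}$ and evaluating at $z=0$ yields $\res(\tilde{\psi}_i) = \res(\psi_i^{(k)})$, so injectivity of $\res$ on $D_0(\S^k)_{kh}$ forces $\tilde{\psi}_i = \psi_i^{(k)}$, i.e., $s_i\eta_i = \eta_i$.

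The main obstacle is the bookkeeping of how $s_i$ permutes the hyperplanes of $\S^k$: one must pinpoint exactly which extra divisibility from Proposition \ref{charsrb} restores $s_i$-invariance of the logarithmic domain of each basis element (namely $\{\alpha_i+kz=0\}$ for part (1) and the removal of $\{\alpha_i-kz=0\}$ in part (2)). Once this matching is in place, both parts collapse to a uniqueness statement via the Ziegler restriction isomorphism in degree $kh$.
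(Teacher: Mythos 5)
This proposition is quoted by the paper from \cite{atsimple} (Theorem 3.5) without proof, so there is no in-paper argument to compare against; judged on its own, your proof is correct and is in substance the argument of the cited source. Both halves are handled soundly: you correctly observe that $s_{i}$ does not preserve $\S^{k}$ (it exchanges the cones of $H_{\alpha_{i},k}$ and $H_{\alpha_{i},-k}$), that the extra divisibility of Proposition \ref{charsrb} exactly enlarges (in part (1)) or shrinks (in part (2)) the logarithmic locus to an $s_{i}$-stable arrangement so that $s_{i}\varphi_{j}^{(k)}$, respectively $\tilde{\psi}_{i}=(\alpha_{i}-kz)\,s_{i}\eta_{i}$, again lies in $D_{0}(\S^{k})_{kh}$, and that the $W$-equivariance of $\Xi$ (Proposition \ref{atsimple}) together with $s_{i}\alpha_{j}^{*}=\alpha_{j}^{*}$, respectively $s_{i}\alpha_{i}^{\sharp}=-\alpha_{i}^{\sharp}$, pins down the Ziegler restriction; the sign bookkeeping in part (2) (the factor $-\frac{\alpha_{i}-kz}{\alpha_{i}+kz}$ restricting to $-1$ at $z=0$, cancelling the $-1$ from $s_{i}\res(\psi_{i}^{(k)})=-\res(\psi_{i}^{(k)})$) is also right, after which injectivity of $\res$ on the degree-$kh$ part (the proposition quoted from \cite{ycharacterization}) finishes both parts.
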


\section{Construction of bases of the type $A_{2}$}

For the rest of this paper, 
we assume that the root system $\Phi$ is of the type $A_2$. Hence 
the Coxeter number $h=3$ and Yoshinaga's result 
in \cite{ycharacterization} tells us that 
$\S^k$ and $\C^k$ are free with exponents 
$$
\exp(\S^k)=(1,3k,3k),\ \exp(\C^k)=(1,3k+1,3k+2).
$$
Let $\{ \alpha_{1},\alpha_{2} \} \subset E^{*}$ be a simple system. For $\alpha \in \Phi^+$ and 
$k \in \mathbb{Z}$, 
let $H_{\alpha-kz}:=\{\alpha-kz=0\}$. Then the results in \cite{atsimple} show that 
$\S^k \setminus \{H_{\alpha_i-kz}\}$ and 
$\S^k \setminus \{H_{\alpha_1-kz},H_{\alpha_2-kz}\}$ are both free with exponents 
\begin{eqnarray*}
\exp(\S^k \setminus \{H_{\alpha_i-kz}\})&=&(1,3k-1,3k),\\
\exp(\S^k \setminus \{H_{\alpha_1-kz},H_{\alpha_2-kz}\})&=&(1,3k-1,3k-1)
\end{eqnarray*}
for $i=1,2$. Now we prove the key result to show 
Theorem \ref{main}.

\begin{theorem}
\label{cons}
Let us fix basic invariants 
$$
P_{1}:=\alpha_{1}^{2}+\alpha_{1}\alpha_{2}+\alpha_{2}^{2},\ 
P_{2}:=\frac{2}{27} (\alpha_{1}-\alpha_{2})(\alpha_{1}+2\alpha_{2})(2\alpha_{1}+\alpha_{2})
$$
of the Weyl group $W$ and choose the primitive derivation $D$ in such a way that 
$D(P_2)=\dfrac{1}{3}$. 
For $k \in \mathbb{Z}_{\geq 0}$, let $M_k,\ N_{k}$
and $T_{k}$ be the same as in Theorem \ref{main}. 

Let $\varphi_{1}^{(k)},\varphi_{2}^{(k)}$ be an $\sp$ of $D_{0}(\S^{k})$.
Then 
\[
[\varphi_{1}^{(k)},\varphi_{2}^{(k)}] M_{k}
\]
form a $W$-invariant basis for $D_{0}(\C^{k})$, and
\[
[\varphi_{1}^{(k)},\varphi_{2}^{(k)}] M_{k} T_{k} N_{k+1}
\]
form an $\sm$ of $D_{0}(\S^{k+1})$.
\end{theorem}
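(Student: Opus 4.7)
My plan is to verify the two claims of Theorem \ref{cons} separately, in each case combining hyperplane-ideal checks with Saito's criterion. Throughout I set $\beta_i:=\alpha_i+kz$, $\beta_3:=\alpha_1+\alpha_2+kz$, $\gamma_i:=\alpha_i-(k+1)z$ and $\gamma_3:=\alpha_1+\alpha_2-(k+1)z$.

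For the Catalan claim, the first column of $M_k$ reproduces the $k$-Euler derivation $\beta_1\varphi_1^{(k)}+\beta_2\varphi_2^{(k)}$, which is $W$-invariant and lies in $D_0(\C^k)_{3k+1}$ by Proposition \ref{keuler}. For the second column, write $\theta=(2\alpha_1+4\alpha_2+3kz)\beta_1\varphi_1^{(k)}-(4\alpha_1+2\alpha_2+3kz)\beta_2\varphi_2^{(k)}$; the $\beta_i$-factor in the $i$-th coefficient combines with the off-diagonal $\sp$ relation $\varphi_i^{(k)}(\beta_j)\in(\beta_j)S_z$ for $i\neq j$ (Proposition \ref{charsrb}(1)) to yield $\theta(\beta_1),\theta(\beta_2)$ in the required ideals. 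For $W$-invariance, I apply Proposition \ref{reflection}(2) to $\psi_1^{(k)}=2\varphi_1^{(k)}-\varphi_2^{(k)}$ to solve explicitly for $s_1\varphi_1^{(k)}$ as a rational combination of $\varphi_1^{(k)},\varphi_2^{(k)}$, then verify $s_1\theta=\theta$ by a direct polynomial expansion; the symmetry $\alpha_1\leftrightarrow\alpha_2$ yields $s_2\theta=\theta$. Since $s_1(\beta_2)=\beta_3$, $W$-invariance of $\theta$ now forces $\theta(\beta_3)\in(\beta_3)S_z$. Saito's criterion closes this half: $\det M_k=-6\beta_1\beta_2\beta_3$ is precisely the extra factor needed to promote the Saito determinant of the $\S^k$-basis to $Q(\C^k)$ up to a nonzero scalar.

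For the $\S^{k+1}$ claim, divisibility of column $i$ of $N_{k+1}$ by $\gamma_i$ is manifest by inspection, so the corresponding $\theta_i$ lies in $\gamma_i\Der(S_z)$, matching the $\sm$ characterization (Proposition \ref{charsrb}(2)). Each $\theta_i$ is in $D_0(\C^k)$ as an $S_z$-linear combination of the Catalan basis just constructed, and Saito's criterion applied with $\det N_{k+1}=6\gamma_1\gamma_2\gamma_3$ and $\det T_k=1/((3k+1)(3k+2))$ gives the right determinant identity for a $D(\S^{k+1})$-basis. The main obstacle is the membership $\theta_i\in D_0(\S^{k+1})$, i.e.\ preservation of $\gamma_j$ for $j\neq i$ and of $\gamma_3$. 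My plan is to pass to the Ziegler restriction $\res$ at $z=0$ and show $\res\theta_i\in D(\A_\Phi,2(k+1))$, strengthening the automatic $\alpha^{2k+1}$-divisibility coming from $\C^k$-membership to $\alpha^{2(k+1)}$. The crucial input is enhanced divisibility of $\res\varphi_p^{(k)}(\alpha_q)$ by $\alpha_q^{2k+1}$ whenever $p\neq q$: indeed $\varphi_p^{(k)}(\alpha_q-iz)\in(\alpha_q-iz)S_z$ for every $i\in\{-k,\ldots,k\}$, combining the $\S^k$-membership of $\varphi_p^{(k)}$ (for $-k+1\leq i\leq k$) with the $\sp$ condition of Proposition \ref{charsrb}(1) at $i=-k$, so the product of these $2k+1$ coprime linear forms restricts at $z=0$ to $\alpha_q^{2k+1}$. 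The coefficients $1/(3k+1)$ and $1/(3k+2)$ in $T_k$ are tuned so that, after substituting this enhanced divisibility into the $z=0$ specialization of $M_kT_kN_{k+1}$, all cancellations required for $\res\theta_i(\alpha_j)\in\alpha_j^{2(k+1)}S$ on every positive root $\alpha_j$ take place; this is the $A_2$ manifestation of Terao's matrix $B^{(k)}$ mentioned in the introduction. The restriction isomorphism in degree $3(k+1)$ following Proposition \ref{atsimple} then lifts $\res\theta_i$ uniquely into $D_0(\S^{k+1})_{3(k+1)}$, and a comparison using divisibility by $\gamma_i$ together with a dimension count on $\ker\res\cap D_0(\C^k)_{3(k+1)}$ identifies $\theta_i$ with this lift.
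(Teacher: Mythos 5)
Your Catalan half is essentially the paper's Proposition \ref{srb+} with cosmetic variations: the paper obtains the $(\alpha_1+\alpha_2+kz)$-divisibility by rewriting $\theta_2^{(k)}$ as $(2\alpha_1+4\alpha_2+3kz)\theta_1^{(k)}-6(\alpha_1+\alpha_2+kz)(\alpha_2+kz)\varphi_2^{(k)}$ rather than from $W$-invariance, and gets $s_i$-invariance directly from Proposition \ref{reflection}(1) without solving for $s_1\varphi_1^{(k)}$, but your variants go through. The Shi half, however, has a genuine gap at exactly the point where the theorem has content.

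Two problems. First, the assertion that the constants $\frac{1}{3k+1},\frac{1}{3k+2}$ are ``tuned so that all cancellations take place'' is the statement to be proved, not an argument. The only quantitative input you supply is divisibility: $\res\varphi_p^{(k)}(\alpha_q)\in\alpha_q^{2k+1}S$ for $p\neq q$ (correct, and only for the two simple roots; for the diagonal entries and for the highest root $\alpha_1+\alpha_2$, where no $\sp$ condition is available, you only have $\alpha^{2k}$). Divisibility data of this kind is invariant under rescaling the diagonal entries of $T_k$, so it cannot detect --- much less verify --- the specific values $\frac{1}{3k+1},\frac{1}{3k+2}$; and passing from $\alpha^{2k}$ to $\alpha^{2k+2}$ on the diagonal and on $\alpha_1+\alpha_2$ requires two orders of cancellation that depend on the actual polynomials. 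The missing input is precisely Theorem \ref{rest} (identifying $\varphi_p^{(k)}|_{z=0}$ via $AR_{2k}A^{-1}$) together with the recursion $R_{2k}^{-1}R_{2k+2}=J(B^{(k+1)})^{-1}J^{T}A$; that is the computation carried out in Proposition \ref{t}. Second, even granting $\res\theta_i\in D(\A_\Phi,2k+2)$, this does not yield $\theta_i\in D(\S^{k+1})$: the kernel of $\res$ on $D_0(\C^{k})_{3k+3}$ is $zD_0(\C^k)_{3k+2}$, of dimension $4$, and your proposed identification of $\theta_i$ with the lift $\tilde\theta_i$ uses $\gamma_i$-divisibility of $\tilde\theta_i$, which you only know after recognizing $\tilde\theta_i$ as $\psi_i^{(k+1)}$ --- i.e.\ after computing $\res\theta_i$ exactly, which returns you to the first problem. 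The paper avoids both issues by running the construction backwards: Proposition \ref{srb-} shows, via the Ziegler-restriction diagram and Lemma \ref{nabla2}, that $[\psi_1^{(k+1)},\psi_2^{(k+1)}]N_{k+1}^{-1}$ is already a $W$-invariant basis of $D_0(\C^k)$, so a transition matrix $T_k$ to $[\varphi_1^{(k)},\varphi_2^{(k)}]M_k$ exists a priori; degree count, $W$-invariance and $\tau$-invariance (Proposition \ref{tauinv}) force it to be a constant diagonal matrix; and only then is the restriction to $z=0$ with Terao's $B^{(k)}$ used to evaluate the two scalars. If you want to keep your forward-direction strategy, you must actually carry out the $z=0$ computation of $\res\theta_i$ against $\nabla_{I^*(d\alpha_i)}\nabla_D^{-(k+1)}\theta_E$ and then supply a correct lifting argument; as written, the central step is a placeholder.
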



We prove Theorem \ref{cons} by using following propositions.

\begin{prop}\label{srb+}
Let $[\theta_{1}^{(k)},\theta_{2}^{(k)}] = [\varphi_{1}^{(k)},\varphi_{2}^{(k)}] M_{k}$.
Then $\theta_{1}^{(k)},\theta_{2}^{(k)}$ form a $W$-invariant basis for $D_{0}(\C^{k})$.
\end{prop}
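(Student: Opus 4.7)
The plan is to verify three things about $\theta_1^{(k)}, \theta_2^{(k)}$: membership in $D_0(\C^k)$, $W$-invariance, and the basis property via Saito's criterion (Proposition \ref{saito}).

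First I would observe that
\[
\theta_1^{(k)} = (\alpha_1+kz)\varphi_1^{(k)} + (\alpha_2+kz)\varphi_2^{(k)}
\]
is exactly the $k$-Euler derivation of Proposition \ref{keuler}, so it is automatically $W$-invariant and lies in $D_0(\C^k)_{3k+1}$. For $\theta_2^{(k)}$, which is already in $D_0(\S^k)$ as an $S_z$-combination of $\sp$ elements, the remaining membership conditions concern the three new hyperplane forms $\alpha_1+kz, \alpha_2+kz, \alpha_1+\alpha_2+kz$ of $\C^k \setminus \S^k$. For $\alpha_1+kz$ this is immediate: the $\varphi_1^{(k)}$-coefficient of $\theta_2^{(k)}$ carries the explicit factor $(\alpha_1+kz)$, while $\varphi_2^{(k)}(\alpha_1+kz)\in(\alpha_1+kz)S_z$ by Proposition \ref{charsrb}(1). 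The case $\alpha_2+kz$ is handled symmetrically, and the case $\alpha_1+\alpha_2+kz$ I will deduce from the $W$-invariance of $\theta_2^{(k)}$, since $s_2$ sends $\alpha_1+kz$ to $\alpha_1+\alpha_2+kz$.

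For the $W$-invariance of $\theta_2^{(k)}$ I would compute $s_1\theta_2^{(k)}$ and $s_2\theta_2^{(k)}$ directly. Two features make this tractable: the coefficient $2\alpha_1+4\alpha_2+3kz$ in the $(1,2)$-entry of $M_k$ is $s_1$-invariant, and $4\alpha_1+2\alpha_2+3kz$ in the $(2,2)$-entry is $s_2$-invariant. To deal with the remaining factors I would rewrite $\theta_2^{(k)}$ via the relation $[\varphi_1^{(k)},\varphi_2^{(k)}] = [\psi_1^{(k)},\psi_2^{(k)}]A^{-1}$ (Remark \ref{innermatrix}) and apply the reflection identities of Proposition \ref{reflection}(1) and (2) term by term. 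The two calculations $s_1\theta_2^{(k)}=\theta_2^{(k)}$ and $s_2\theta_2^{(k)}=\theta_2^{(k)}$ are related by the $\alpha_1 \leftrightarrow \alpha_2$ symmetry built into $M_k$, so establishing one yields the other by the same pattern of cancellations.

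Finally, for the basis property I would invoke Saito's criterion. A direct expansion gives
\[
\det M_k = -(\alpha_1+kz)(\alpha_2+kz)\bigl[(4\alpha_1+2\alpha_2+3kz)+(2\alpha_1+4\alpha_2+3kz)\bigr] = -6\prod_{\alpha\in\Phi^+}(\alpha+kz).
\]
Since $\theta_E,\varphi_1^{(k)},\varphi_2^{(k)}$ already form a basis of $D(\S^k)$, their Saito determinant equals, up to a nonzero scalar, $Q(\S^k) = z\prod_{\alpha\in\Phi^+}\prod_{i=-k+1}^{k}(\alpha-iz)$. Multiplying by $\det M_k$ produces, up to a nonzero scalar, $z\prod_{\alpha\in\Phi^+}\prod_{i=-k}^{k}(\alpha-iz) = Q(\C^k)$, and Proposition \ref{saito} then confirms that $\theta_E,\theta_1^{(k)},\theta_2^{(k)}$ form a basis of $D(\C^k)$, i.e., $\theta_1^{(k)},\theta_2^{(k)}$ form a basis of $D_0(\C^k)$. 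The hard part will be the $W$-invariance step: although the quadratic coefficients in $M_k$ are each preserved by one of the simple reflections, the linear factors $\alpha_i+kz$ and the $\sp$ derivations themselves mix non-trivially under reflections, and the cancellations producing $s_i\theta_2^{(k)} = \theta_2^{(k)}$ depend on the precise form of the second column of $M_k$, which is calibrated exactly to the reflection formulas of Proposition \ref{reflection}.
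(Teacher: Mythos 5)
Your plan is correct and has the same overall skeleton as the paper's proof: recognize $\theta_1^{(k)}$ as the $k$-Euler derivation, check $\theta_2^{(k)}$ against the three hyperplanes of $\C^k\setminus\S^k$ via Proposition \ref{charsrb}(1), prove $W$-invariance via Proposition \ref{reflection}, and close with Saito's criterion using $\det M_k=-6(\alpha_1+kz)(\alpha_2+kz)(\alpha_1+\alpha_2+kz)$; your determinant computation and the comparison $Q(\C^k)/Q(\S^k)=\prod_{\alpha\in\Phi^+}(\alpha+kz)$ match the paper. The two places where you deviate are both workable but worth flagging. For the hyperplane $\alpha_1+\alpha_2+kz$ the paper does not route through $W$-invariance: it rewrites $\theta_2^{(k)}=(2\alpha_1+4\alpha_2+3kz)\theta_1^{(k)}-6(\alpha_1+\alpha_2+kz)(\alpha_2+kz)\varphi_2^{(k)}$, which exhibits the needed factor directly, and this single identity is also the paper's entire $s_1$-invariance argument, since $s_1$ fixes $2\alpha_1+4\alpha_2+3kz$, fixes $\theta_1^{(k)}$ and $\varphi_2^{(k)}$, and swaps the factors $\alpha_2+kz$ and $\alpha_1+\alpha_2+kz$ (and symmetrically for $s_2$ using $\varphi_1^{(k)}$). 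Your alternative for $W$-invariance, passing to the $\sm$ via Remark \ref{innermatrix} and invoking Proposition \ref{reflection}, does go through, but \emph{not} literally ``term by term'': neither part of Proposition \ref{reflection} controls $s_1\varphi_1^{(k)}$ or $s_1\psi_2^{(k)}$ directly, so you must first extract $s_1\varphi_1^{(k)}=\tfrac{-(\alpha_1+kz)}{\alpha_1-kz}\varphi_1^{(k)}+\tfrac{\alpha_1}{\alpha_1-kz}\varphi_2^{(k)}$ from $2\varphi_1^{(k)}=\psi_1^{(k)}+\varphi_2^{(k)}$ and Proposition \ref{reflection}(2), and then verify that the rational coefficients cancel against $s_1$ applied to the entries of $M_k$ (they do). Your symmetry argument for the third hyperplane is fine provided, as you arrange, the $s_2$-invariance is established before it is used. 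In short: same proof architecture, with the paper's key simplification --- absorbing the uncontrolled $\varphi_1^{(k)}$ term into the already $W$-invariant $k$-Euler derivation --- replaced by a heavier but valid direct computation.
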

\begin{proof}
Since 
$\theta_{1}^{(k)}=
(\alpha_{1}+kz)\varphi_{1}^{(k)}+(\alpha_{2}+kz)\varphi_{2}^{(k)}$ 
is the $k$-Euler derivation,
it follows from Proposition \ref{keuler} that
$\theta_{1}^{(k)} \in D_{0}(\C^{k})^{W}$.
Let us show $\theta_{2}^{(k)} \in D_0(\C^{k})^{W}$.
By Proposition \ref{charsrb} $(1)$, it is clear that 
$\theta_{2}^{(k)}(\alpha_{i}+kz) \in (\alpha_{i}+kz)S_{z}$ $(i=1,2)$.
Since
\begin{align*}
\theta_{2}^{(k)} 
&= (2\alpha_{1}+4\alpha_{2}+3kz)(\alpha_{1}+kz) \varphi_{1}^{(k)}
-(4\alpha_{1}+2\alpha_{2}+3kz)(\alpha_{2}+kz) \varphi_{2}^{(k)} \\
&= (2\alpha_{1}+4\alpha_{2}+3kz)\{ \theta_{1}^{(k)} - (\alpha_{2}+kz)\varphi_{2}^{(k)} \} \\
&\hspace{15em}
-(4\alpha_{1}+2\alpha_{2}+3kz)(\alpha_{2}+kz) \varphi_{2}^{(k)} \\
&= (2\alpha_{1}+4\alpha_{2}+3kz)\theta_{1}^{(k)}
-6(\alpha_{1}+\alpha_{2}+kz)(\alpha_{2}+kz)\varphi_{2}^{(k)},
\end{align*}
it holds that 
$\theta_{2}^{(k)}(\alpha_{1}+\alpha_{2}+kz) \in (\alpha_{1}+\alpha_{2}+kz)S_{z}$. 
So $\theta_{2}^{(k)} \in D_0(\C^{k})$.
Moreover, since $s_{i}\varphi_{j}^{(k)} = \varphi_{j}^{(k)}$ $(i \neq j)$ for the reflection $s_{i}$ 
corresponding to $\alpha_{i}$ 
because of Proposition \ref{reflection} $(1)$,
\begin{eqnarray*}
s_{1} \theta_{2}^{(k)}
&=& (2\alpha_{1}+4\alpha_{2}+3kz)s_{1}\theta_{1}^{(k)}
-6(\alpha_{2}+kz)(\alpha_{1}+\alpha_{2}+kz)s_{1}\varphi_{2}^{(k)} \\
&=&\theta_{2}^{(k)}.
\end{eqnarray*}
Similarly, we can express 
$\theta_2^{(k)}$ in terms of $\theta_1^{(k)}$ and $\varphi_1^{(k)}$. Then the 
same argument as the above 
shows that $s_{2}\theta_{2}^{(k)}=\theta_{2}^{(k)}$. 
Hence $\theta_{2}^{(k)}$ is $W$-invariant.
Finally, since 
\[
\det (M_{k}) = -6(\alpha_{1}+kz)(\alpha_{2}+kz)(\alpha_{1}+\alpha_{2}+kz),
\]
and $\varphi_1^{(k)},\varphi_2^{(k)}$ form a basis for $D_0(\S^k)$, 
Proposition \ref{saito} shows that $\theta_1^{(k)},\theta_2^{(k)}$ form a basis for $D_0(\C^k)$.
\end{proof}

\begin{lemma}\label{nabla2}
Let $\Omega^{1}(\A_{\Phi})$ denote the module of 
logarithmic differential forms of $\A_{\Phi}$ (i.e., 
the dual $S$-module of $D(\A_\Phi)$).
If $\omega \in \Omega^{1}(\A_{\Phi})$, then 
$\nabla_{I^{*}(\omega)}\nabla_{D}^{-k}\theta_{E} \in D(\A_{\Phi},2k-1)$. 
\end{lemma}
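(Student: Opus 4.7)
The plan is to verify the inclusion locally along each hyperplane $H \in \A_\Phi$. Set $\eta := \nabla_D^{-k}\theta_E$. Since $\theta_E \in D(\A_\Phi,1)^W$, iterating the isomorphism $\nabla_D^{-1}$ recalled just before Proposition \ref{atsimple} yields $\eta \in D(\A_\Phi, 2k+1)^W$; in particular $\eta$ is $W$-invariant, and this is what will do the work.

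Fix $H \in \A_\Phi$ with defining linear form $\alpha := \alpha_H$, and pick linear coordinates $(u, y_1, \ldots, y_{\ell-1})$ with $u = \alpha$ and the $y_j$ orthogonal to $u$ under $I^*$. Then $I^*(du) = c\,\partial_u$ for the positive constant $c := I^*(\alpha,\alpha)$, and each $I^*(dy_j)$ lies in the span of the $\partial_{y_k}$'s. I would then invoke Saito's local normal form for logarithmic $1$-forms: near a generic point of $H$,
\[
\omega \;=\; \frac{p}{u}\, du + \sum_j r_j\, dy_j
\]
with $p$ and each $r_j$ regular along $H$. The key content, extracted from the condition that $Q\,d\omega$ be holomorphic, is that the $dy_j$-coefficients of $\omega$ have \emph{no} pole along $H$, not merely a simple one. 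Applying $I^*$ therefore gives
\[
I^*(\omega) \;=\; \frac{cp}{u}\, \partial_u + \theta_0,
\]
where $\theta_0 := \sum_j r_j\, I^*(dy_j)$ is a polynomial derivation in the $\partial_{y_k}$'s, regular along $H$.

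It then remains a direct computation. Writing $\eta = A\,\partial_u + \sum_j B_j\,\partial_{y_j}$, the invariance $s_H\eta = \eta$ forces $A$ odd and each $B_j$ even in $u$; together with $A = \eta(u) \in u^{2k+1} S$ this yields $A \in u^{2k+1} S$ and $\partial_u B_j \in u S$. Hence the $\partial_u$-component of $\nabla_{I^*(\omega)}\eta$ is $(cp/u)\,\partial_u A + \theta_0(A)$, lying in $u^{2k-1} S$ because $\partial_u A \in u^{2k} S$; while each $\partial_{y_j}$-component $(cp/u)\,\partial_u B_j + \theta_0(B_j)$ is regular along $H$ because $\partial_u B_j \in u S$ and $\theta_0$ is polynomial. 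Letting $H$ range over $\A_\Phi$ gives the required $\nabla_{I^*(\omega)}\eta \in D(\A_\Phi, 2k-1)$. The main obstacle I anticipate is establishing the local normal form for $\omega$ above: it is exactly the alignment of the pole of $I^*(\omega)$ with the normal direction $\partial_u$ that allows the parity of $B_j$ supplied by the $W$-invariance of $\eta$ to cancel the a priori simple pole of $\nabla_{I^*(\omega)}\eta$.
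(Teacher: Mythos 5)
Your argument is correct, but it is a genuinely different route from the one the paper takes. The paper's proof is a two-line reduction: it quotes the structure theorem of Abe--Terao (\cite{atprimitive}, Definition 3.1 and Theorem 3.3) that $I^{*}(\Omega^1(\A_{\Phi}))\subset\bigoplus_{i} S\,\frac{\partial}{\partial P_{i}}$, and then applies Lemma \ref{nabla} together with the $S$-linearity of $\nabla$ in its lower argument. You instead prove the statement from scratch by a localization-and-parity argument: the local normal form $\omega=\frac{p}{u}\,du+\sum r_j\,dy_j$ at a generic point of $H$ (which indeed holds, and in fact drops out even more directly from duality with $D(\A_\Phi)$, since locally $D(\A_\Phi)$ is generated by $u\partial_u$ and the $\partial_{y_j}$), the orthogonal splitting of $I^{*}$, and the fact that $s_H$-invariance of $\eta=\nabla_D^{-k}\theta_E$ forces $\eta(u)$ odd and the tangential coefficients even in $u$, which is exactly what cancels the simple pole of $I^{*}(\omega)$. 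This is essentially the mechanism behind Yoshinaga's proof of Lemma \ref{nabla} itself, so you have in effect reproved that lemma with $I^{*}(\omega)$ in place of $\frac{\partial}{\partial P_i}$, trading the flat-structure machinery for a self-contained computation. Two small points you gloss over but which are routine: the coefficients $p, r_j$ are only regular along $H$ (not global polynomials), which is all your order computation needs; and passing from "no pole at the generic point of each $H\in\A_\Phi$" to "polynomial coefficients, with $\theta(\alpha_H)$ divisible by $\alpha_H^{2k-1}$ in $S$" uses that the only possible poles of $\nabla_{I^{*}(\omega)}\eta$ lie along $\A_\Phi$ and that $S$ is a UFD.
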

\begin{proof}
By Definition 3.1 and Theorem 3.3 in \cite{atprimitive}, it follows that 
\[
I^{*}(\Omega^1(\A_{\Phi}))
\subset \bigoplus_{i=1}^{\ell} S \frac{\partial}{\partial P_{i}}.
\]
Since $\nabla_{\frac{\partial}{\partial P_{i}}}
\nabla_{D}^{-k}\theta_{E} \in D(\A_{\Phi},2k-1)$
by Lemma \ref{nabla}, we conclude that 
$\nabla_{I^{*}(\omega)}\nabla_{D}^{-k}\theta_{E} \in D(\A_{\Phi},2k-1)$.
\end{proof}

\begin{prop}\label{srb-}
Let $\psi_{1}^{(k)},\psi_{2}^{(k)}$ be an $\sm$ of $D_{0}(\S^{k})$. 
Then 
$[\eta_{1}^{(k-1)},\eta_{2}^{(k-1)}] := [\psi_{1}^{(k)},\psi_{2}^{(k)}] N_{k}^{-1}$
form a $W$-invariant basis for $D_{0}(\C^{k-1})$.
\end{prop}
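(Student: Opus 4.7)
The strategy parallels Proposition \ref{srb+}, but because we now \emph{divide} $\psi_1,\psi_2$ by $(\alpha_i-kz)$ and $(\alpha_1+\alpha_2-kz)$ rather than multiplying, the chief obstacle will be verifying polynomiality of the resulting $\eta_i^{(k-1)}$. First I would invert the matrix relation. Using $\det N_k = 6(\alpha_1-kz)(\alpha_2-kz)(\alpha_1+\alpha_2-kz)$ together with $\psi_i = (\alpha_i-kz)\tilde\psi_i$ (Proposition \ref{charsrb}(2), where $\tilde\psi_i \in \Der(S_z)$), one obtains the \emph{a priori rational} expressions
\[
\eta_1^{(k-1)} = \frac{\tilde\psi_1-\tilde\psi_2}{6(\alpha_1+\alpha_2-kz)},\qquad \eta_2^{(k-1)} = \tilde\psi_1 - (2\alpha_1+4\alpha_2-3kz)\,\eta_1^{(k-1)}.
\]
Since $\tilde\psi_1 \in D_0(\S^k\setminus\{H_{\alpha_1-kz}\}) \subset D_0(\C^{k-1})$, once $\eta_1^{(k-1)}\in D_0(\C^{k-1})$ is established, $\eta_2^{(k-1)}\in D_0(\C^{k-1})$ follows at once.

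Next I would check $W$-invariance of $\eta_1^{(k-1)}$ as a rational derivation. Extracting $s_1\tilde\psi_2$ from $\psi_2 = -\varphi_1+2\varphi_2$ together with the constraint $s_1\tilde\psi_1=\tilde\psi_1$ (which forces the form of $s_1\varphi_1$), the identity $s_1\eta_1^{(k-1)}=\eta_1^{(k-1)}$ reduces after clearing denominators to the algebraic relation $\psi_1+2\psi_2=3\varphi_2$, which is immediate from $\psi_i=\sum_p I^*(\alpha_i,\alpha_p)\varphi_p$; $s_2$-invariance is the $\alpha_1\leftrightarrow\alpha_2$ mirror. For polynomiality I would use Ziegler restriction. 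A formal computation via Remark \ref{I^{*}} gives
\[
\res(\eta_1^{(k-1)}) = \nabla_{I^*(\omega)}\nabla_D^{-k}\theta_E,\qquad \omega = \frac{1}{6(\alpha_1+\alpha_2)}\!\left(\frac{d\alpha_1}{\alpha_1}-\frac{d\alpha_2}{\alpha_2}\right),
\]
and one checks $\omega\in\Omega^1(\A_\Phi)$: for $\theta\in D(\A_\Phi)$ with $\theta(\alpha_i)=\alpha_if_i$, the condition $\theta(\alpha_1+\alpha_2)\in(\alpha_1+\alpha_2)$ forces $f_1\equiv f_2\pmod{\alpha_1+\alpha_2}$, so $\omega(\theta)\in S$; in addition, a short computation (both numerator $\alpha_2 d\alpha_1-\alpha_1 d\alpha_2$ and denominator $\alpha_1\alpha_2(\alpha_1+\alpha_2)$ negate under $s_1$) shows $\omega$ is $W$-invariant. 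Lemma \ref{nabla2} places $\res(\eta_1^{(k-1)})$ in $D(\A_\Phi,2k-1)$, and since Yoshinaga gives $\exp(\C^{k-1})=(1,3k-2,3k-1)$, Proposition \ref{ziegler} yields a polynomial lift $\tilde\eta_1\in D_0(\C^{k-1})_{3k-2}$ which is unique because $\ker\res$ vanishes in that degree, and which is $W$-invariant by uniqueness together with the $W$-invariance of $\omega$.

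The difference $\eta_1^{(k-1)}-\tilde\eta_1$ is therefore a $W$-invariant rational derivation with zero Ziegler restriction, hence of the form $cz\,\tilde\eta_1/[6(\alpha_1+\alpha_2-kz)]$ for some $c\in\mathbb{R}$; but applying $s_1$ sends the denominator $\alpha_1+\alpha_2-kz$ to $\alpha_2-kz$ while fixing $z$ and $\tilde\eta_1$, so this expression is $W$-invariant only for $c=0$. Thus $\eta_1^{(k-1)}=\tilde\eta_1$ is polynomial and in $D_0(\C^{k-1})$, and the formula for $\eta_2^{(k-1)}$ gives polynomiality and membership for the other basis element. Finally, Proposition \ref{saito} closes the proof: from $[\psi_1,\psi_2]=[\eta_1^{(k-1)},\eta_2^{(k-1)}]N_k$ and $\det N_k = 6\,Q(\S^k)/Q(\C^{k-1})$, together with the fact that $\psi_1,\psi_2$ form a basis of $D_0(\S^k)$, the Saito determinant for $\{\theta_E,\eta_1^{(k-1)},\eta_2^{(k-1)}\}$ is a nonzero scalar multiple of $Q(\C^{k-1})$, as required. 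The main obstacle is the $c=0$ step, resolved by combining the $W$-invariance of $\omega$ (hence of $\tilde\eta_1$) with the $W$-invariance of the rational derivation $\eta_1^{(k-1)}$.
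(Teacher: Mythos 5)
Your overall architecture tracks the paper's quite closely: you invert $N_{k}$, reduce everything to the membership $\eta_{1}^{(k-1)}\in D_{0}(\C^{k-1})$, restrict to $z=0$ via the logarithmic form $\omega=\frac{1}{6(\alpha_{1}+\alpha_{2})}\bigl(\frac{d\alpha_{1}}{\alpha_{1}}-\frac{d\alpha_{2}}{\alpha_{2}}\bigr)$ together with Lemma \ref{nabla2}, and handle $\eta_{2}^{(k-1)}$ by writing it as a combination of $\eta_{1}^{(k-1)}$ and $\psi_{i}^{(k)}/(\alpha_{i}-kz)\in D_{0}(\S^{k}\setminus\{H_{\alpha_{i}-kz}\})\subset D_{0}(\C^{k-1})$. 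The genuine gap is in the descent step, where you pass from ``$\res(\eta_{1}^{(k-1)})=\res(\tilde\eta_{1})$'' to ``$\eta_{1}^{(k-1)}=\tilde\eta_{1}$''. You assert that the difference, being a $W$-invariant rational derivation with zero restriction, must have the form $cz\,\tilde\eta_{1}/[6(\alpha_{1}+\alpha_{2}-kz)]$. But vanishing at $z=0$ only gives $\eta_{1}^{(k-1)}-\tilde\eta_{1}=z\xi/[6(\alpha_{1}+\alpha_{2}-kz)]$ for some homogeneous polynomial derivation $\xi$ of degree $3k-2$; nothing forces $\xi$ to be proportional to $\tilde\eta_{1}$, since $\xi$ is a priori an arbitrary derivation rather than an element of the one-dimensional space $D_{0}(\C^{k-1})_{3k-2}$. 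Your $s_{1}$-argument (which sends $\alpha_{1}+\alpha_{2}-kz$ to $\alpha_{2}-kz$), applied to the general form, only yields $(\alpha_{1}+\alpha_{2}-kz)\mid\xi$, hence that the difference equals $z\xi'/6$ for some $W$-invariant polynomial derivation $\xi'$ of degree $3k-3$; it does not yield $\xi'=0$.

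The missing ingredient is exactly what the paper's commutative diagram supplies. The right object to restrict is $\eta:=6(\alpha_{1}+\alpha_{2}-kz)\eta_{1}^{(k-1)}=\psi_{1}^{(k)}/(\alpha_{1}-kz)-\psi_{2}^{(k)}/(\alpha_{2}-kz)$, which by Proposition \ref{charsrb} $(2)$ lies in $D_{0}(\S^{k}\setminus\{H_{\alpha_{1}-kz},H_{\alpha_{2}-kz}\})_{3k-1}$. Since that arrangement has exponents $(1,3k-1,3k-1)$, the kernel $z\,D_{0}(\S^{k}\setminus\{H_{\alpha_{1}-kz},H_{\alpha_{2}-kz}\})_{3k-2}$ of the Ziegler restriction vanishes in degree $3k-1$, so $\res$ is an isomorphism in that degree and carries the subspace $(\alpha_{1}+\alpha_{2}-kz)D_{0}(\C^{k-1})_{3k-2}$ onto $(\alpha_{1}+\alpha_{2})D(\A_{\Phi},2k-1)_{3k-2}$; your computation that $\res(\eta)\in(\alpha_{1}+\alpha_{2})D(\A_{\Phi},2k-1)_{3k-2}$ then pulls back to $\eta\in(\alpha_{1}+\alpha_{2}-kz)D_{0}(\C^{k-1})_{3k-2}$, which is the desired membership. (Equivalently, your $\xi'$ satisfies $(\alpha_{1}+\alpha_{2}-kz)z\xi'\in D_{0}(\S^{k}\setminus\{H_{\alpha_{1}-kz},H_{\alpha_{2}-kz}\})_{3k-1}$ with zero restriction, which forces $\xi'=0$ --- but you need those exponents to say so.) Two smaller remarks: once membership is established, the $W$-invariance of $\eta_{1}^{(k-1)}$ comes for free, since $D_{0}(\C^{k-1})_{3k-2}$ is one-dimensional and spanned by the $(k-1)$-Euler derivation, so your direct reflection computation is an unnecessary (though workable) detour; and the claim that $s_{1}$-invariance ``reduces to $\psi_{1}+2\psi_{2}=3\varphi_{2}$'' is not literally what clearing denominators produces --- the verification also needs the expression of $s_{1}\varphi_{1}$ extracted from $s_{1}\bigl(\psi_{1}^{(k)}/(\alpha_{1}-kz)\bigr)=\psi_{1}^{(k)}/(\alpha_{1}-kz)$, so state that identity explicitly if you keep this route.
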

\begin{proof}
First we will show that $\eta_{1}^{k-1} \in D_{0}(\C^{k-1})^{W}$.
Since
\[
N_{k}^{-1}=
\begin{pmatrix}
\dfrac{1}{6(\alpha_{1}-kz)(\alpha_{1}+\alpha_{2}-kz)} &
\dfrac{4\alpha_{1}+2\alpha_{2}-3kz}{6(\alpha_{1}-kz)(\alpha_{1}+\alpha_{2}-kz)} \\
& \\
-\dfrac{1}{6(\alpha_{2}-kz)(\alpha_{1}+\alpha_{2}-kz)}
&
\dfrac{2\alpha_{1}+4\alpha_{2}-3kz}{6(\alpha_{2}-kz)(\alpha_{1}+\alpha_{2}-kz)}
\end{pmatrix},
\]
we have
\[
\eta_{1}^{(k-1)}=
\frac{1}{6(\alpha_{1}+\alpha_{2}-kz)}
\left(
\frac{\psi_{1}^{(k)}}{\alpha_{1}-kz}
-\frac{\psi_{2}^{(k)}}{\alpha_{2}-kz}
\right).
\]
Consider a commtative diagram
\[
\xymatrix{
D_{0}(\S^{k} \setminus \{ H_{\alpha_{1}-kz},H_{\alpha_{2}-kz} \} )_{3k-1}
\ar[r]^-{\res}_-{\sim} \ar@{}[d]|*{\bigcup}
& D(\A_\Phi, 2k-{\bf m})_{3k-1} \ar@{}[d]|*{\bigcup} \\
(\alpha_{1}+\alpha_{2}-kz) D_{0}(\C^{k-1})_{3k-2}
\ar[r]^-{\res}_-{\sim}
& (\alpha_{1}+\alpha_{2}) D(\A_\Phi, 2k-1)_{3k-2},
}
\]
where ${\bf m}:\A_{\Phi} \rightarrow \{ 0,1 \}$ is a multiplicity defined by
\[
{\bf m}(H)=
\begin{cases}
1 & H \in \{ H_{\alpha_{1}},H_{\alpha_{2}} \} \\
0 & H=H_{\alpha_{1}+\alpha_{2}}
\end{cases}
\hspace{1em}
( H \in \A_{\Phi} ).
\]
Let
\[
\eta := 6(\alpha_{1}+\alpha_{2}-kz)\eta_{1}^{(k-1)}
=\frac{\psi_{1}^{(k)}}{\alpha_{1}-kz}-\frac{\psi_{2}^{(k)}}{\alpha_{2}-kz}. 
\]
It follows from Proposition \ref{charsrb} $(2)$ that
$\eta \in D_{0}(\S^{k} \setminus \{ H_{\alpha_{1}-kz},H_{\alpha_{2}-kz} \})_{3k-1}$.
By the definition of $\sm$ and Remark \ref{I^{*}}, we have
\begin{align*}
\frac{1}{\alpha_{1}+\alpha_{2}}\ \res (\eta) 
&= \frac{1}{\alpha_{1}+\alpha_{2}}
\ \res 
\left(
\frac{\psi_{1}^{(k)}}{\alpha_{1}-kz}-\frac{\psi_{2}^{(k)}}{\alpha_{2}-kz}
\right) \\
&= \frac{1}{\alpha_{1}+\alpha_{2}}
\left(
\frac{\nabla_{I^{*}(d\alpha_{1})} \nabla_{D}^{-k} \theta_{E}}{\alpha_{1}}
-\frac{\nabla_{I^{*}(d\alpha_{2})} \nabla_{D}^{-k} \theta_{E}}{\alpha_{2}}
\right) \\
&= \nabla_{I^{*} \left( \frac{1}{\alpha_{1}+\alpha_{2}} ( \frac{d\alpha_{1}}{\alpha_{1}} - \frac{d\alpha_{2}}{\alpha_{2}} ) \right)} 
\nabla_{D}^{-k} \theta_{E}.
\end{align*}
Since
\[ 
\frac{1}{\alpha_{1}+\alpha_{2}} ( \frac{d\alpha_{1}}{\alpha_{1}} - \frac{d\alpha_{2}}{\alpha_{2}} )
\in \Omega^{1} (\A_{\Phi}),
\]
Lemma \ref{nabla2} implies that 
\[
\frac{1}{\alpha_{1} + \alpha_{2}} \res (\eta)
\in D(\A_{\Phi},2k-1)_{3k-2}.
\]
Hence
\[
\res (\eta)
\in (\alpha_{1} + \alpha_{2}) D(\A_{\Phi},2k-1)_{3k-2}.
\]
Then we can see that 
$\eta \in (\alpha_{1}+\alpha_{2}-kz) D_{0}(\C^{k-1})_{3k-2}$ 
by chasing the diagram above. 
Thus we may conclude that $\eta_{1}^{(k-1)} \in D_{0}(\C^{k-1})_{3k-2}$.
Since $D_{0}(\C^{k-1})_{3k-2}=D_{0}(\C^{k-1})_{3k-2}^{W}$ is a 
one-dimensional $\mathbb{R}$-vector space 
generated by $(k-1)$-Euler derivation by Proposition \ref{keuler} and 
$\exp(\C^{k-1})=(1,3k-2,3k-1)$, 
we obtain
$\eta_{1}^{(k-1)} \in D_{0}(\C^{k-1})^{W}$.
Next we will prove that 
$\eta_{2}^{(k-1)} \in D_{0}(\C^{k-1})^{W}$.
We compute
\begin{align*}
\eta_{2}^{(k-1)}
&=\frac{4\alpha_{1}+2\alpha_{2}-3kz}{6(\alpha_{1}-kz)(\alpha_{1}+\alpha_{2}-kz)}
\psi_{1}^{(k)}
+\frac{2\alpha_{1}+4\alpha_{2}-3kz}{6(\alpha_{2}-kz)(\alpha_{1}+\alpha_{2}-kz)}
\psi_{2}^{(k)} \\
&=(4\alpha_{1}+2\alpha_{2}-3kz)
\left(
\eta_{1}^{(k-1)}
+\frac{\psi_{2}^{(k)}}{6(\alpha_{2}-kz)(\alpha_{1}+\alpha_{2}-kz)}
\right) \\
&\hspace{14em}
+\frac{2\alpha_{1}+4\alpha_{2}-3kz}{6(\alpha_{2}-kz)(\alpha_{1}+\alpha_{2}-kz)}
\psi_{2}^{(k)} \\
&=(4\alpha_{1}+2\alpha_{2}-3kz)\eta_{1}^{(k-1)}
+\frac{\psi_{2}^{(k)}}{\alpha_{2}-kz}.
\end{align*}
Since 
$\psi_{2}^{(k)}/(\alpha_{2}-kz) \in D_{0}(\S^{k} \setminus \{ H_{\alpha_{2}-kz} \})
\subset D_{0}(\C^{k-1})$, 
it holds that 
$\eta_{2}^{(k-1)} \in D_{0}(\C^{k-1})$.
Moreover, since 
$s_{i} (\psi_{i}^{(k)}/(\alpha_{i}-kz)) =(\psi_{i}^{(k)}/(\alpha_{i}-kz))$
for the reflection $s_{i}$ corresponding to $\alpha_{i}$
because of Proposition \ref{reflection} $(2)$,
\begin{align*}
s_{2}\eta_{2}^{(k-1)}
&=s_{2}(4\alpha_{1}+2\alpha_{2}-3kz) \cdot s_{2}\eta_{1}^{(k-1)}
+s_{2} \left( \frac{\psi_{2}^{(k)}}{\alpha_{2}-kz} \right) \\
&=(4\alpha_{1}+2\alpha_{2}-3kz) \eta_{1}^{(k-1)}+\frac{\psi_{2}^{(k)}}{\alpha_{2}-kz}
=\eta_{2}^{(k-1)}.
\end{align*}
Similarly, we can express $\eta_2^{(k-1)}$ in terms of 
$\eta_1^{(k-1)}$ and $\psi_1^{(k)}/(\alpha_1-kz)$. Then the same argument as  the above 
shows that $s_{1}\eta_{2}^{(k-1)}=\eta_{2}^{(k-1)}$.
Hence $\eta_{2}^{(k-1)}$ is $W$-invariant.
Finally, since 
\[
\det (N_{k}^{-1}) 
=\frac{1}{6(\alpha_{1}-kz)(\alpha_{2}-kz)(\alpha_{1}+\alpha_{2}-kz)},
\]
and $\psi_1^{(k)},\psi_2^{(k)}$ form a basis for 
$D_0(\S^k)$, Proposition \ref{saito} shows that 
$\eta_1^{(k-1)},\eta_2^{(k-1)}$ form a basis for 
$D_0(\C^{k-1})$. 
\end{proof}

It follows from Proposition \ref{srb+} and Proposition \ref{srb-}
that both $[\varphi_{1}^{(k)},\varphi_{2}^{(k)}]M_{k}$
and $[\psi_{1}^{(k+1)},\psi_{2}^{(k+1)}]N_{k+1}^{-1}$ 
are bases for $D_{0}(\C^{k})^{W}$
and their degrees are equal to $( 3k+1,3k+2 )$.
Therefore, there exists a matrix 
$T_{k} \in M_{2}(\mathbb{R}[\alpha_{1},\alpha_{2},z])$
such that 
$[\varphi_{1}^{(k)},\varphi_{2}^{(k)}]M_{k} \cdot T_{k}
=[\psi_{1}^{(k+1)},\psi_{2}^{(k+1)}]N_{k+1}^{-1}
=[\varphi_{1}^{(k+1)},\varphi_{2}^{(k+1)}]AN_{k+1}^{-1}$.
To study this matrix $T_k$, let us show the following lemmas
and proposition.

\begin{lemma}\label{tauands0sm}
Let $\tau$ be the reflection corresponding to $z$ and
$s_{0}$ that to $\alpha_{1}+\alpha_{2}$.
Let $\psi_{1}^{(k)},\psi_{2}^{(k)}$ be an $\sm$ of $D_{0}(\S^{k})$.
Then $\tau s_{0} (\psi_{1}^{(k)})=-\psi_{2}^{(k)}$ and
$\tau s_{0} (\psi_{2}^{(k)})=-\psi_{1}^{(k)}$.
\end{lemma}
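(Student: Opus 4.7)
The plan is to transfer the $\tau s_0$-action from the $\sm$ to the associated $\sp$ $\{\varphi_1^{(k)},\varphi_2^{(k)}\}$ via the relation $\psi_i^{(k)}=\sum_p I^*(\alpha_i,\alpha_p)\varphi_p^{(k)}$, and then to show $\tau s_0 \varphi_1^{(k)}=-\varphi_2^{(k)}$ and $\tau s_0 \varphi_2^{(k)}=-\varphi_1^{(k)}$ by passing to the Ziegler restriction.

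First I would record the combinatorial action of $\tau s_0$. Since $s_0$ is the reflection in the highest root $\alpha_1+\alpha_2$, one computes $s_0\alpha_1=-\alpha_2$, $s_0\alpha_2=-\alpha_1$, and dually $s_0\alpha_1^*=-\alpha_2^*$, $s_0\alpha_2^*=-\alpha_1^*$. Because $\tau$ fixes $E^*$ pointwise and only negates $z$, the map $\tau s_0$ permutes the hyperplanes of $\S^k$ (swapping $H_{\alpha_1,i}$ with $H_{\alpha_2,i}$, preserving each $H_{\alpha_1+\alpha_2,i}$ and fixing $\{z=0\}$), and hence acts on $D_0(\S^k)$.

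The heart of the argument is the identity $\tau s_0 \varphi_1^{(k)}=-\varphi_2^{(k)}$. By Proposition \ref{atsimple} the map $\Xi:E\to D(\A_\Phi,2k)$ is $W$-equivariant, and since $D(\A_\Phi,2k)$ is built only from $\alpha_1,\alpha_2$ (the variable $z$ plays no role), $\tau$ acts trivially on it. On the $D_0(\S^k)$-side, the Ziegler restriction $\res$ commutes with $s_0$ (because $s_0$ fixes $z$) and with $\tau$: for any $\theta=\sum_j f_j(\alpha_1,\alpha_2,z)\partial_{\alpha_j}\in D_0(\S^k)$, the derivation $\tau\cdot\theta$ has coefficients $f_j(\alpha_1,\alpha_2,-z)$, which agree with those of $\theta$ after setting $z=0$. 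Combining these facts,
\[
\res(\tau s_0 \varphi_1^{(k)})=s_0\res(\varphi_1^{(k)})=s_0\Xi(\alpha_1^*)=\Xi(s_0\alpha_1^*)=-\Xi(\alpha_2^*)=-\res(\varphi_2^{(k)}).
\]
Since both $\tau s_0 \varphi_1^{(k)}$ and $-\varphi_2^{(k)}$ lie in $D_0(\S^k)_{3k}$ and $\res$ restricts to an $\mathbb{R}$-linear isomorphism on this degree part by Yoshinaga's proposition stated earlier in Section 2, we conclude $\tau s_0 \varphi_1^{(k)}=-\varphi_2^{(k)}$, and the same reasoning yields $\tau s_0 \varphi_2^{(k)}=-\varphi_1^{(k)}$.

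Finally, the $A_2$ inner-product matrix gives $\psi_1^{(k)}=2\varphi_1^{(k)}-\varphi_2^{(k)}$ and $\psi_2^{(k)}=-\varphi_1^{(k)}+2\varphi_2^{(k)}$, so by linearity
\[
\tau s_0\psi_1^{(k)}=2(-\varphi_2^{(k)})-(-\varphi_1^{(k)})=\varphi_1^{(k)}-2\varphi_2^{(k)}=-\psi_2^{(k)},
\]
and similarly $\tau s_0\psi_2^{(k)}=-\psi_1^{(k)}$. The main technical obstacle lies in the third paragraph, namely verifying that $\res$ commutes with $\tau$ on $D_0(\S^k)$; once this bookkeeping is clean, the remainder is a short equivariance computation.
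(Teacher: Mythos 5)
Your proof is correct, but it reaches the conclusion by a mechanism different from the paper's. The paper works directly with the $\sm$: since $\tau s_{0}$ preserves $D_{0}(\S^{k})$ and carries $(\alpha_{1}-kz)\Der(S_{z})$ into $(\alpha_{2}-kz)\Der(S_{z})$, the divisibility characterization of Proposition \ref{charsrb} $(2)$ forces $\tau s_{0}(\psi_{1}^{(k)})=c\,\psi_{2}^{(k)}$ for some scalar $c$, and $c=-1$ is then read off by restricting to $z=0$ and using the $W$-equivariance of $\nabla_{I^{*}(d\alpha_{i})}\nabla_{D}^{-k}\theta_{E}$; the second identity follows because $\tau s_{0}$ is an involution. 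You instead work with the $\sp$ and never invoke the characterization: you identify $\tau s_{0}(\varphi_{1}^{(k)})$ with $-\varphi_{2}^{(k)}$ in a single step by comparing Ziegler restrictions and appealing to the bijectivity of $\res$ on $D_{0}(\S^{k})_{kh}$ from Yoshinaga's theorem, together with the $W$-equivariance of $\Xi$ and the computation $s_{0}\alpha_{1}^{*}=-\alpha_{2}^{*}$, and then transfer to the $\sm$ through the matrix $A$. Both arguments ultimately rest on the same $z=0$ computation, and the commutation $\res\circ\tau=\res$ on $D_{0}(\S^{k})$ that you single out as the technical point is indeed needed (your verification via the coefficients is sound, and the paper uses the same fact implicitly when it writes $\tau s_{0}(\psi_{1}^{(k)})|_{z=0}=\tau s_{0}(\nabla_{I^{*}(d\alpha_{1})}\nabla_{D}^{-k}\theta_{E})$). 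What the two routes buy: yours is more economical, replacing the ``scalar multiple plus normalization'' step by the injectivity of $\res$ in the top degree; the paper's route makes visible the structural fact that $\tau s_{0}$ permutes the distinguished divisibility conditions defining the $\sm$, which is the feature that generalizes most readily beyond degree $kh$ and beyond type $A_{2}$.
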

\begin{proof}
First, note that $s_{0}(\alpha_{1})=-\alpha_{2}$,
$s_{0}(\alpha_{2})=-\alpha_{1}$ 
and $s_{0}(\alpha_{1}+\alpha_{2})=-(\alpha_{1}+\alpha_{2})$. Also, 
$\tau s_0=s_0 \tau$ since $\tau$ acts on $E$ and $S$ trivially. 
Since $\tau s_{0}$ preserves the Shi arrangement $\S^{k}$,
it holds that $\tau s_{0} (D_{0}(\S^{k}))=D_{0}(\S^{k})$.
Therefore, $\tau s_{0} (\psi_{1}^{(k)}) \in D_{0}(\S^{k})$.
Moreover, 
\[
\frac{\tau s_{0}(\psi_{1}^{k})}{\alpha_{2}-kz}
=-\tau s_{0}
\left(
\frac{\psi_{1}^{k}}{\alpha_{2}-kz}
\right)
\in \Der (S_{z}).
\]
Hence Proposition \ref{charsrb} (2) 
shows that $\tau s_{0} (\psi_{1}^{(k)}) = c \psi_{2}^{(k)}$
for some non-zero $c \in \R^{\times}$.
Since
\begin{align*}
c \nabla_{I^{*}(d\alpha_{2})} \nabla_{D}^{-k} \theta_{E}
=c\psi_{2}^{(k)} |_{z=0} 
&=\tau s_{0} (\psi_{1}^{(k)}) |_{z=0} \\
&= \tau s_{0} (\nabla_{I^{*}(d\alpha_{1})} \nabla_{D}^{-k} \theta_{E}) \\
&= \nabla_{s_{0}(I^{*}(d\alpha_{1}))} \nabla_{D}^{-k} \theta_{E} \\
&= -\nabla_{I^{*}(d\alpha_{2})} \nabla_{D}^{-k} \theta_{E},
\end{align*}
hence Proposition \ref{atsimple} shows that $c=-1$, which implies that 
$\tau s_{0} (\psi_{1}^{(k)})=-\psi_{2}^{(k)}$.
Since $\tau s_0=s_0 \tau$ is a reflection, 
we obtain $\tau s_{0} (\psi_{2}^{(k)})=-\psi_{1}^{(k)}$.	
\end{proof}

\begin{lemma}\label{tauands0sp}
Let $\varphi_{1}^{(k)},\varphi_{2}^{(k)}$ be an $\sp$ of $D_{0}(\S^{k})$.
Then $\tau s_{0} (\varphi_{1}^{(k)})=-\varphi_{2}^{(k)}$ and
$\tau s_{0} (\varphi_{2}^{(k)})=-\varphi_{1}^{(k)}$.
\end{lemma}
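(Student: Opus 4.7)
The plan is to deduce this lemma from the previous Lemma \ref{tauands0sm} on the $\sm$ by transporting through the change-of-basis matrix $A$ coming from Remark \ref{innermatrix}. Since $A \in M_2(\mathbb{R})$ is a matrix of scalars, it commutes with the $\tau s_{0}$ action, so the computation reduces to a $2 \times 2$ matrix identity.

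First, I would use Remark \ref{innermatrix} to write
\[
[\varphi_{1}^{(k)},\varphi_{2}^{(k)}]
=[\psi_{1}^{(k)},\psi_{2}^{(k)}] A^{-1}.
\]
Applying $\tau s_{0}$ termwise and invoking Lemma \ref{tauands0sm}, which gives $\tau s_{0}(\psi_{1}^{(k)})=-\psi_{2}^{(k)}$ and $\tau s_{0}(\psi_{2}^{(k)})=-\psi_{1}^{(k)}$, I obtain
\[
\tau s_{0}[\varphi_{1}^{(k)},\varphi_{2}^{(k)}]
=-[\psi_{2}^{(k)},\psi_{1}^{(k)}] A^{-1}
=-[\psi_{1}^{(k)},\psi_{2}^{(k)}] J A^{-1},
\]
where $J=\bigl(\begin{smallmatrix}0&1\\1&0\end{smallmatrix}\bigr)$ is the permutation matrix.

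Next, I would reintroduce $[\varphi_{1}^{(k)},\varphi_{2}^{(k)}]=[\psi_{1}^{(k)},\psi_{2}^{(k)}]A^{-1}$ to convert the expression back to the $\sp$ side, obtaining $-[\varphi_{1}^{(k)},\varphi_{2}^{(k)}] A J A^{-1}$. The claim $\tau s_{0}(\varphi_{1}^{(k)})=-\varphi_{2}^{(k)}$ and $\tau s_{0}(\varphi_{2}^{(k)})=-\varphi_{1}^{(k)}$ is then equivalent to the matrix identity $AJA^{-1}=J$, i.e.\ $AJ=JA$. A one-line check with the explicit matrix $A=\bigl(\begin{smallmatrix}2&-1\\-1&2\end{smallmatrix}\bigr)$ from Theorem \ref{main} confirms this commutation (both sides equal $\bigl(\begin{smallmatrix}-1&2\\2&-1\end{smallmatrix}\bigr)$).

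There is no real obstacle here: all the geometric content has already been extracted into Lemma \ref{tauands0sm}, and what remains is purely linear-algebraic. Conceptually, the commutation $AJ=JA$ reflects the fact that the involution $\alpha_{1}\leftrightarrow\alpha_{2}$ (induced by $s_{0}$ on the simple roots, up to sign) is an isometry of the $A_{2}$ inner product; this is why the interchange-and-negate behavior transfers verbatim from the $\sm$ to the $\sp$.
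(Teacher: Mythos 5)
Your proof is correct and follows essentially the same route as the paper: both arguments pass from the $\mathrm{SRB}_{-}$ to the $\mathrm{SRB}_{+}$ via Remark \ref{innermatrix} and reduce the claim to the conjugation identity $A\bigl(\begin{smallmatrix}0&-1\\-1&0\end{smallmatrix}\bigr)A^{-1}=\bigl(\begin{smallmatrix}0&-1\\-1&0\end{smallmatrix}\bigr)$, which you verify by the equivalent commutation $AJ=JA$. The only difference is cosmetic (you factor out the sign and phrase the check as a commutation), so there is nothing to add.
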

\begin{proof}
By Remark \ref{innermatrix} and Lemma \ref{tauands0sm},
we may compute
\begin{align*}
\tau s_{0} [\varphi_{1}^{(k)},\varphi_{2}^{(k)}]
&=\tau s_{0} [\psi_{1}^{(k)},\psi_{2}^{(k)}]A^{-1} \\
&=[\psi_{1}^{(k)},\psi_{2}^{(k)}]
\begin{pmatrix}
0 & -1 \\
-1 & 0 
\end{pmatrix}
A^{-1} \\
&=[\varphi_{1}^{(k)},\varphi_{2}^{(k)}]A
\begin{pmatrix}
0 & -1 \\
-1 & 0 
\end{pmatrix}
A^{-1} \\
&=[\varphi_{1}^{(k)},\varphi_{2}^{(k)}]
\begin{pmatrix}
0 & -1 \\
-1 & 0 
\end{pmatrix},
\end{align*}
which completes the proof.
\end{proof}

\begin{prop}\label{tauinv}
The bases $\theta_1^{(k)},\theta_2^{(k)}$ for $D_0(\C^k)$ and 
$\eta_1^{(k-1)},\eta_2^{(k-1)}$ for \sloppy{$D_0(\C^{(k-1)})$} are 
$W$ and $\tau$-invariant.
\label{inv}
\end{prop}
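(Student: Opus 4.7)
The $W$-invariance half of the claim is already in hand from Propositions \ref{srb+} and \ref{srb-}, so the only new thing to establish is that each of $\theta_1^{(k)},\theta_2^{(k)},\eta_1^{(k-1)},\eta_2^{(k-1)}$ is fixed by the reflection $\tau$ in the variable $z$. My plan is to reduce $\tau$-invariance to $\tau s_0$-invariance (using that $s_0 \in W$ and that the derivations are already $W$-invariant) and then to verify $\tau s_0$-invariance by a direct computation powered by Lemmas \ref{tauands0sp} and \ref{tauands0sm}.

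For $\theta_i^{(k)}$, I write the defining identity $[\theta_1^{(k)},\theta_2^{(k)}]=[\varphi_1^{(k)},\varphi_2^{(k)}]M_k$ entry-by-entry. Applying $\tau s_0$ to each side, the derivations $\varphi_1^{(k)},\varphi_2^{(k)}$ swap with an overall sign by Lemma \ref{tauands0sp}; on the polynomial coefficients in $M_k$ one computes directly that $\tau s_0(\alpha_1+kz)=-(\alpha_2+kz)$, $\tau s_0(\alpha_2+kz)=-(\alpha_1+kz)$, $\tau s_0(2\alpha_1+4\alpha_2+3kz)=-(4\alpha_1+2\alpha_2+3kz)$, and likewise for the remaining factor. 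Multiplying out, the four minus signs cancel in pairs, and the columns of $M_k$ are interchanged in such a way that $\tau s_0(\theta_i^{(k)})=\theta_i^{(k)}$ for $i=1,2$. Combined with $s_0$-invariance (a special case of $W$-invariance from Proposition \ref{srb+}), this gives $\tau(\theta_i^{(k)})=\theta_i^{(k)}$.

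The argument for $\eta_j^{(k-1)}$ is formally identical, this time starting from $[\eta_1^{(k-1)},\eta_2^{(k-1)}]=[\psi_1^{(k)},\psi_2^{(k)}]N_k^{-1}$ and using Lemma \ref{tauands0sm}, which gives $\tau s_0(\psi_1^{(k)})=-\psi_2^{(k)}$ and $\tau s_0(\psi_2^{(k)})=-\psi_1^{(k)}$. The key observation is that the denominators $(\alpha_1-kz)(\alpha_1+\alpha_2-kz)$ and $(\alpha_2-kz)(\alpha_1+\alpha_2-kz)$ appearing in the two rows of $N_k^{-1}$ are exchanged by $\tau s_0$ up to a consistent sign, as are the numerators $4\alpha_1+2\alpha_2-3kz$ and $2\alpha_1+4\alpha_2-3kz$; once again all signs conspire so that $\tau s_0$ fixes $\eta_j^{(k-1)}$, and $W$-invariance from Proposition \ref{srb-} upgrades this to $\tau$-invariance.

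The only real obstacle is the sign bookkeeping: one must be careful that the action of $\tau s_0$ on each scalar factor (which uses $\tau(z)=-z$, $s_0(\alpha_1)=-\alpha_2$, $s_0(\alpha_2)=-\alpha_1$, $s_0(z)=z$) is paired correctly with the sign-swap of $\varphi_i^{(k)}$ or $\psi_i^{(k)}$ produced by Lemmas \ref{tauands0sp} and \ref{tauands0sm}. Once the pairing is done correctly, the verification is a short symbolic manipulation, not a genuine difficulty.
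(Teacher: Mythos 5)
Your proof is correct, and for $\theta_2^{(k)}$ and $\eta_2^{(k-1)}$ it is exactly the paper's argument: both reduce $\tau$-invariance to $\tau s_0$-invariance via the already-established $W$-invariance (so $s_0$-invariance) and then verify $\tau s_0$-invariance by the sign computation driven by Lemmas \ref{tauands0sp} and \ref{tauands0sm}. Where you genuinely diverge is in the treatment of $\theta_1^{(k)}$ and $\eta_1^{(k-1)}$: the paper does \emph{not} compute there, but instead argues that $\tau$ preserves $\C^k$ and degrees, that $D_0(\C^k)_{3k+1}$ and $D_0(\C^{k-1})_{3k-2}$ are one-dimensional (from the known exponents) and spanned by $\theta_1^{(k)}$ and $\eta_1^{(k-1)}$, hence $\tau$ acts by scalars $c_1, c_2$, and that $c_1=c_2=1$ because the restrictions to $z=0$ are $\tau$-fixed. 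You instead run the same $\tau s_0$ computation on the first columns of $M_k$ and $N_k^{-1}$; this works (e.g.\ $\tau s_0(\alpha_1+kz)=-(\alpha_2+kz)$ paired with $\tau s_0(\varphi_1^{(k)})=-\varphi_2^{(k)}$ swaps the two summands of the $k$-Euler derivation, and similarly the two terms of $\eta_1^{(k-1)}$ are exchanged with all signs cancelling). Your route is more uniform and avoids invoking the exponents of $\C^k$, at the cost of a second round of sign bookkeeping; the paper's route gets the first components almost for free from the one-dimensionality of the relevant graded piece. Both are complete proofs.
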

\begin{proof}
The $W$-invariance is checked in Propositions \ref{srb+} and 
\ref{srb-}.
First we show the $\tau$-invariance of
$\theta_{1}^{(k)}$ and $\eta_{1}^{(k-1)}$.
Note that the action of $\tau$ preserves $\C^{k}$. 
Hence $\tau$ acts on $D_0(\C^{k})$ with the degree preserving. 
By Propositions \ref{srb+} and \ref{srb-}, 
we know that $\dim_{\mathbb{R}} D_0(\C^k)_{3k+1}=
\dim_{\mathbb{R}} D_0(\C^{k-1})_{3k-2}=1$, and 
they are generated by $\theta_1^{(k)}$ and $\eta_1^{(k-1)}$ respectively. 
Hence $\tau \theta_1^{(k)}=c_1\theta_1^{(k)}$ and 
$\tau \eta_1^{(k-1)} =c_2 \eta_1^{(k-1)}$ 
for some non-zero $c_1,c_2 \in \R^{\times}$. 
Since $\theta_1^{(k)}|_{z=0}$ and $\eta_1^{(k-1)}|_{z=0}$ 
are $\tau$-invariant by 
Propositions \ref{srb+} and \ref{srb-}, it holds that $c_1=c_2=1$. 
Hence $\theta_{1}^{(k)}$ and $\eta_{1}^{(k-1)}$
are $\tau$-invariant.
To show the $\tau$-invariance of
$\theta_{2}^{(k)}$ and $\eta_{2}^{(k-1)}$,
it suffices to show that 
$\tau s_{0}(\theta_{2}^{(k)})=\theta_{2}^{(k)}$,
$\tau s_{0}(\eta_{2}^{(k)})=\eta_{2}^{(k)}$ 
because $\theta_{2}^{(k)}, \eta_{2}^{(k-1)}$ are $W$-invariant.
By using Lemma \ref{tauands0sm} and \ref{tauands0sp}, we may compute
\begin{align*}
&\tau s_{0} (\theta_{2}^{(k)}) \\
&=\tau s_{0}
\left(
(2\alpha_{1}+4\alpha_{2}+3kz)(\alpha_{1}+kz)\varphi_{1}^{(k)}
-(4\alpha_{1}+2\alpha_{2}+3kz)(\alpha_{2}+kz)\varphi_{2}^{(k)}
\right) \\
&=(-2\alpha_{2}-4\alpha_{1}-3kz)(-\alpha_{2}-kz)(-\varphi_{2}^{(k)}) \\
&\hspace{14em}
-(-4\alpha_{2}-2\alpha_{1}-3kz)(-\alpha_{1}-kz)(-\varphi_{1}^{(k)}) \\
&=\theta_{2}^{(k)},
\end{align*}
\begin{align*}
&\tau s_{0} (\eta_{2}^{(k)}) \\
&=\tau s_{0}
\left(
\frac{4\alpha_{1}+2\alpha_{2}-3kz}{6(\alpha_{1}-kz)(\alpha_{1}+\alpha_{2}-kz)}\psi_{1}^{(k)}
+\frac{2\alpha_{1}+4\alpha_{2}-3kz}{6(\alpha_{2}-kz)(\alpha_{1}+\alpha_{2}-kz)}\psi_{2}^{(k)}
\right) \\
&=\frac{-4\alpha_{2}-2\alpha_{1}+3kz}{6(-\alpha_{2}+kz)(-\alpha_{2}-\alpha_{1}+kz)}(-\psi_{2}^{(k)}) \\
&\hspace{14em}
+\frac{-2\alpha_{2}-4\alpha_{1}+3kz}{6(-\alpha_{1}+kz)(-\alpha_{2}-\alpha_{1}+kz)}(-\psi_{1}^{(k)}) \\
&=\eta_{2}^{(k)}.
\end{align*}
Hence $\theta_{2}^{(k)}$ and $\eta_{2}^{(k)}$ 
are $\tau$-invariant.
\end{proof}

Now let us study the entries of $T_k$. 
Note that 
every entry of $T_k$ is $W$-invariant since $T_k$ gives a 
transformation between the $W$-invariant bases in $D_0(\C^k)^W$. 
Comparing the degrees of both sides,
we can see that the $(2,1)$-entry of $T_{k}$ is 0,
the $(1,1)$-entry and the $(2,2)$-entry of $T_{k}$ are constants,
and the $(1,2)$-entry of $T_{k}$ is a $W$-invariant polynomial of degree 1.
However, 
$\mathbb{R}[\alpha_{1},\alpha_{2},z]^{W}$ is generated by 
$z$. Hence the $(1,2)$-entry is $c z$ for $c \in \mathbb{R}$. Now 
apply Proposition \ref{tauinv} to conclude that $c=0$.

Hence we may assume that
\[
T_{k}=
\begin{pmatrix}
a_{k} & 0\\
0 & b_{k}
\end{pmatrix}
\ (a_{k},b_{k} \in \mathbb{R}).
\]
Thus $T_k|_{z=0}=T_k$ and 
$[\varphi_{1}^{(k)},\varphi_{2}^{(k)}]|_{z=0}M_{k}|_{z=0} \cdot T_{k}
=[\varphi_{1}^{(k+1)},\varphi_{2}^{(k+1)}]|_{z=0}AN_{k+1}^{-1}|_{z=0}$.
Now recall the following:

\begin{theorem}\label{rest}
{\bf (\cite{atprimitive}, Proposition 4.2)}
Define
\[
R_{2k} := (-1)^{k} J(D^{k}(\alpha_{1}),D^k(\alpha_{2}))^{-1},
\]
where $J(f_1,f_2)$ denotes the Jacobian matrix of $f_1,f_2 \in S$ with 
respect to the simple system $\alpha_1,\alpha_2$, i.e., 
$J(f_1,f_2)=(\partial f_j/\partial \alpha_i)$. 
Then 
\[
[\varphi_{1}^{(k)}|_{z=0},\varphi_{2}^{(k)}|_{z=0}]
=[\nabla_{\partial_{1}} \nabla_{D}^{-k}\theta_{E},
\nabla_{\partial_{2}} \nabla_{D}^{-k}\theta_{E}]
=[\partial_{1},\partial_{2}] AR_{2k}A^{-1}.
\]
\end{theorem}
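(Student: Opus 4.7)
The plan is to prove Theorem \ref{rest} in two stages, one for each equality.

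The first equality unfolds directly from the $\sp$ definition. Since $\varphi_i^{(k)} = \Theta(\alpha_i^*) = \res^{-1}\Xi(\alpha_i^*)$ with $\Xi(v) = \nabla_{\partial_v}\nabla_D^{-k}\theta_E$, applying the Ziegler restriction $\res$ (evaluation at $z=0$) yields $\varphi_i^{(k)}|_{z=0} = \Xi(\alpha_i^*) = \nabla_{\partial_i}\nabla_D^{-k}\theta_E$, using that $\partial_i$ is the basis element of $E$ dual to $\alpha_i$.

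For the second equality, I would evaluate both sides as derivations at $\alpha_1,\alpha_2$. From the definition of the flat connection $\nabla$, a direct computation gives $(\nabla_D\eta)(\alpha_j) = D(\eta(\alpha_j))$ for any $\eta \in \Der(F)$; iterating from $\theta_E(\alpha_j) = \alpha_j$ yields $(\nabla_D^{-k}\theta_E)(\alpha_j) = D^{-k}(\alpha_j)$ in $F$, and hence $(\nabla_{\partial_i}\nabla_D^{-k}\theta_E)(\alpha_j) = \partial_i D^{-k}(\alpha_j)$. Therefore the LHS, expressed as a row of derivations in the basis $(\partial_1,\partial_2)$, has coefficient matrix $J(D^{-k}\alpha_1, D^{-k}\alpha_2)^T$. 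Comparing with the coefficient matrix $AR_{2k}A^{-1}$ of the RHS (and using $A^T = A$), the theorem reduces to the matrix identity
\[
J(D^k\alpha_1, D^k\alpha_2)^T \cdot A \cdot J(D^{-k}\alpha_1, D^{-k}\alpha_2) = (-1)^k A,
\]
whose $(i,j)$-entry, with $I^*$ extended to the bilinear pairing on differentials, is the scalar identity
\[
I^*(dD^k\alpha_i,\, dD^{-k}\alpha_j) = (-1)^k\, I^*(d\alpha_i, d\alpha_j).
\]

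The heart of the proof, and the main obstacle, is this scalar identity. I would establish it by induction on $k$, reducing to the one-step relation $I^*(dD\phi, dD^{-1}\psi) = -I^*(d\phi, d\psi)$ with $\phi = D^{k-1}\alpha_i$ and $\psi = D^{-(k-1)}\alpha_j$. Crucially, this is \emph{not} a generic skew-adjointness of $D$ (which already fails on arbitrary inputs, as one checks by taking $\phi$ to be a $W$-invariant of positive degree); the specific form of $\phi,\psi$ is essential. To establish it, I would pass to Saito's flat coordinates $P_1,\ldots,P_\ell$ on $E/W$, where the Saito matrix $g^{pq} := I^*(dP_p, dP_q)$ satisfies $D(g^{pq}) = \eta^{pq}$ for a constant matrix $\eta$ (the ``flat metric''), a foundational fact in Saito's primitive-form theory. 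Combining this with the Leibniz rule and the commutation $[D, \tfrac{\partial}{\partial P_p}] = 0$ gives the general identity
\[
I^*(dDf, dh) + I^*(df, dDh) = D\,I^*(df, dh) - \eta(df, dh),
\]
so the one-step relation is equivalent to the claim $D\,I^*(d\phi, dD^{-1}\psi) = \eta(d\phi, dD^{-1}\psi)$ for $\phi,\psi$ of the specified form. Verifying this uses both the $W$-equivariance that makes $D^n\alpha_i$ transform as an element of $E^*$ (since $D$ is $W$-invariant) and the precise shape of $D^{-1}\psi$ relative to Saito's flat structure; this is the central technical step. Once it is in hand, iterating the one-step relation $k$ times closes the theorem.
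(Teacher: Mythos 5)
First, a framing point: the paper does not prove this statement at all --- it is quoted verbatim from \cite{atprimitive} (Proposition 4.2) --- so the only meaningful comparison is with the proof in that source. Within your proposal, the outer layers are correct. The first equality is definitional, exactly as you say: $\varphi_i^{(k)}=(\res^{-1}\circ\,\Xi)(\alpha_i^{*})$ and the Ziegler restriction is restriction to $z=0$, so $\varphi_i^{(k)}|_{z=0}=\nabla_{\partial_i}\nabla_D^{-k}\theta_E$. Your reduction of the second equality is also sound: since $(\nabla_D\eta)(\alpha_j)=D(\eta(\alpha_j))$ for linear $\alpha_j$, the coefficient matrix of the middle term in the frame $(\partial_1,\partial_2)$ is $J(D^{-k}\alpha_1,D^{-k}\alpha_2)^{T}$ (with $D^{-m}\alpha_j:=(\nabla_D^{-m}\theta_E)(\alpha_j)$, which is legitimate because $\nabla_D^{-1}$ is canonical on the invariant modules), and I checked that your matrix identity, using $A^{T}=A$, is indeed equivalent to the scalar identity $I^{*}(dD^{k}\alpha_i,dD^{-k}\alpha_j)=(-1)^{k}I^{*}(d\alpha_i,d\alpha_j)$. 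Even your $\eta$-identity is correct granted Saito's flat-metric fact $D(g^{pq})=\eta^{pq}$: off the discriminant the $P_p$ are local coordinates, mixed partials in the frame $dP_p$ commute, and $Du$ reads off the $dP_\ell$-coefficient of $du$ for arbitrary, not necessarily $W$-invariant, $u$.

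The genuine gap is that your argument stops exactly where the theorem's content lies. The one-step relation --- equivalently, via your $\eta$-identity, the claim $D\,I^{*}(d\phi,dD^{-1}\psi)=\eta(d\phi,dD^{-1}\psi)$ for $\phi=D^{m-1}\alpha_i$ and $\psi=D^{-(m-1)}\alpha_j$, which is needed for every $1\le m\le k$ in the iteration --- is announced as ``the central technical step'' but never proved; you only gesture at $W$-equivariance and ``the precise shape of $D^{-1}\psi$'' without extracting any usable property from either. This cannot be waved through: the $\eta$-identity holds for \emph{all} $f,h$ and so carries no information about these particular inputs, and, as you correctly observe yourself, generic skew-adjointness of $D$ is false, so the special structure of $D^{-m}\alpha_j$ must enter in an essential way --- and in your text it never does. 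For comparison, Abe--Terao's actual proof bypasses this intrinsic formulation entirely: it is an induction on $k$ through explicit Jacobian computations and the recursion $R_{2k}^{-1}R_{2k+2}=J(B^{(k+1)})^{-1}J^{T}A$ with $B=J^{T}A\,D[J]$, the same identity this paper later invokes to compute $T_k$ in Proposition \ref{t}. As it stands, your submission is a correct reduction plus a plan for the hard step, not a proof.
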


By using these two, let us compute $T_k$ directly in terms of 
$D(\A_{\Phi},2k+1)$. 
For that purpose, let us rewrite several polynomials and matrices in 
\cite{atsimple} in terms of $\alpha_1$ and $\alpha_2$. First, it is easy to check that 
\[
P_{1}=\alpha_{1}^{2}+\alpha_{1}\alpha_{2}+\alpha_{2}^{2},\ 
P_{2}=\frac{2}{27} (\alpha_{1}-\alpha_{2})(\alpha_{1}+2\alpha_{2})(2\alpha_{1}+\alpha_{2}).
\]
are basic invariants of the type $A_{2}$.
Then the Jacobian matrix $J=J(P_1,P_2)$ is
\[
J=
\begin{pmatrix}
2\alpha_{1}+\alpha_{2} & 
\dfrac{2}{9}(2\alpha_{1}^{2}+2\alpha_{1}\alpha_{2}-\alpha_{2}^{2}) \\
\\
\alpha_{1}+2\alpha_{2} & 
\dfrac{2}{9}(\alpha_{1}^{2}-2\alpha_{1}\alpha_{2}-2\alpha_{2}^{2})
\end{pmatrix}.
\]
Hence 
the primitive derivation $D$ is expressed as
\begin{align*}
D &=\frac{1}{Q}
\begin{vmatrix}
\partial_{1}(P_{1}) & \partial_{1}\\
\partial_{2}(P_{1}) & \partial_{2}
\end{vmatrix} \\
&\doteq
\frac{1}{6 \alpha_{1}\alpha_{2}(\alpha_{1}+\alpha_{2})}
[
(\alpha_{1}+2\alpha_{2}) \partial_{1}
-(2\alpha_{1}+\alpha_{2}) \partial_{2}
],
\end{align*}
where $Q=\alpha_1\alpha_2(\alpha_1+\alpha_2)$ is the defining polynomial of 
the Weyl arrangement of the type $A_{2}$. Also in the above, 
we multiplied $-1/6$ to $D$ to satisfy the condition $D(P_2)=1/3$ in Theorem \ref{cons}. 
For a matrix $M=(m_{ij})$, let $D[M]:=(D(m_{ij}))$. Then we can compute 
\[
D[J]=
\frac{1}{18 \alpha_{1}\alpha_{2}(\alpha_{1}+\alpha_{2})}
\begin{pmatrix}
9\alpha_{2} & 4\alpha_{2} (2\alpha_{1}+\alpha_{2}) \\
-9\alpha_{1} & 4\alpha_{1} (\alpha_{1}+2\alpha_{2})
\end{pmatrix}. 
\]
Moreover, the matrix $B:=J^{T}AD[J]$ and $B^{(k)}:=kB+(k-1)B^{T}$ are 
also computed as follows:
\[
B=
\begin{pmatrix}
0 & 2 \\
1 & 0
\end{pmatrix},
\ 
B^{(k)}=
\begin{pmatrix}
0 & 3k-1 \\
3k-2 & 0
\end{pmatrix}
\]
Hence 
\[
(B^{(k)})^{-1}=
\begin{pmatrix}
0 & \dfrac{1}{3k-2} \\
\dfrac{1}{3k-1} & 0
\end{pmatrix}.
\]
Now by using Theorem \ref{rest}, we can determine the matrix $T_{k}$.

\begin{prop}
\[
T_{k}=
\begin{pmatrix}
\dfrac{1}{3k+1} & 0 \\
0 & \dfrac{1}{3k+2}
\end{pmatrix}.
\]
\label{t}
\end{prop}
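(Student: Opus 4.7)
The plan is to exploit the identity defining $T_k$, namely
$$
[\varphi_{1}^{(k)},\varphi_{2}^{(k)}]M_{k}T_{k}=[\psi_{1}^{(k+1)},\psi_{2}^{(k+1)}]N_{k+1}^{-1}=[\varphi_{1}^{(k+1)},\varphi_{2}^{(k+1)}]AN_{k+1}^{-1},
$$
(the second equality via Remark \ref{innermatrix}) by restricting it to the hyperplane $\{z=0\}$. Since the paragraph preceding the statement already shows that $T_k$ is a constant diagonal matrix $\mathrm{diag}(a_k,b_k)$, the restriction loses no information about $T_k$. Substituting the explicit formula from Theorem \ref{rest} on both sides yields the matrix identity
$$
A\,R_{2k}\,A^{-1}\,M_k|_{z=0}\,T_k \;=\; A\,R_{2k+2}\,N_{k+1}^{-1}|_{z=0},
$$
and hence
$$
T_k \;=\; (M_k|_{z=0})^{-1}\,A\,(R_{2k})^{-1}R_{2k+2}\,N_{k+1}^{-1}|_{z=0}.
$$
Moreover, every $k$-dependent term in $M_k$ and $N_{k+1}$ carries a factor of $z$, so $M_k|_{z=0}$ and $N_{k+1}^{-1}|_{z=0}$ are in fact $k$-independent rational $2\times 2$ matrices in $\alpha_1,\alpha_2$, directly read off from the definitions.

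The only $k$-dependent factor is therefore $(R_{2k})^{-1}R_{2k+2} = -J_k J_{k+1}^{-1}$, where $J_k := J(D^k\alpha_1,D^k\alpha_2)$. This is where Terao's matrix $B^{(k)}$ enters: the primitive-derivation recursion from \cite{atprimitive}, once written out using $B=J^{T}AD[J]$ and $B^{(k)}=kB+(k-1)B^T$ as in the excerpt, expresses $J_k J_{k+1}^{-1}$ as a constant multiple of $A^{-1}B^{(k+1)}A$. Plugging in the explicit anti-diagonal matrix
$$
(B^{(k+1)})^{-1}=\begin{pmatrix}0 & \tfrac{1}{3k+1}\\[2pt] \tfrac{1}{3k+2} & 0\end{pmatrix}
$$
from the excerpt, together with the fixed matrices $M_k|_{z=0}$, $N_{k+1}^{-1}|_{z=0}$, and $A$, a direct $2\times 2$ matrix multiplication shows that all $\alpha_i$-dependence cancels and the anti-diagonal shape of $(B^{(k+1)})^{-1}$ is straightened to yield $\mathrm{diag}(1/(3k+1),\,1/(3k+2))$.

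The main obstacle is aligning the pieces of the primitive-derivation recursion correctly: one must track the signs in $R_{2k}=(-1)^kJ_k^{-1}$, the normalisation coming from the choice $D(P_2)=1/3$, and the precise way in which $B^{(k+1)}$ intertwines $J_k$ and $J_{k+1}$, so that the computation has no ambiguity. Once that bookkeeping is done, verifying that $(M_k|_{z=0})^{-1}A$ on the left and $N_{k+1}^{-1}|_{z=0}$ on the right convert the anti-diagonal entries $1/(3k+1),\,1/(3k+2)$ of $(B^{(k+1)})^{-1}$ into the diagonal of $T_k$ is a short mechanical check, every ingredient of which is already tabulated in the excerpt.
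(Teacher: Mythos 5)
Your overall architecture is exactly the paper's: restrict the defining identity
$[\varphi_{1}^{(k)},\varphi_{2}^{(k)}]M_{k}T_{k}=[\varphi_{1}^{(k+1)},\varphi_{2}^{(k+1)}]AN_{k+1}^{-1}$
to $z=0$ (legitimate because $T_k$ was already shown to be a constant diagonal matrix), substitute Theorem \ref{rest}, solve for
$T_k=(M_k|_{z=0})^{-1}A\,R_{2k}^{-1}R_{2k+2}\,(N_{k+1}|_{z=0})^{-1}$,
and evaluate the $k$-dependent factor via Terao's matrix $B^{(k+1)}$. All of that, including the observation that $M_k|_{z=0}$ and $N_{k+1}|_{z=0}$ are $k$-independent, matches the paper's proof.

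There is, however, one concrete error in the step where $B^{(k+1)}$ enters. You assert that the recursion from \cite{atprimitive} expresses $J_kJ_{k+1}^{-1}$ (equivalently $-R_{2k}^{-1}R_{2k+2}$) as a \emph{constant multiple of} $A^{-1}B^{(k+1)}A$, and accordingly you list only $M_k|_{z=0}$, $N_{k+1}^{-1}|_{z=0}$, $A$ and $(B^{(k+1)})^{-1}$ as the ingredients of the final multiplication. The correct identity (Proposition 2.6 of \cite{atprimitive}, as used in the paper) is
\[
R_{2k}^{-1}R_{2k+2}=J\,(B^{(k+1)})^{-1}J^{T}A,
\]
where $J=J(P_1,P_2)$ is the Jacobian of the basic invariants, whose columns have degrees $1$ and $2$; hence $R_{2k}^{-1}R_{2k+2}$ has entries of degree $3$ and is certainly not a constant matrix. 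This is not a cosmetic point: a degree count shows your version cannot work. Since $\det(M_k|_{z=0})$ and $\det(N_{k+1}|_{z=0})$ each have degree $3$ while their adjugates have entries of degree at most $2$, the product $(M_k|_{z=0})^{-1}\cdot C\cdot(N_{k+1}|_{z=0})^{-1}$ with $C$ constant has entries of strictly negative degree and can never be the constant matrix $\mathrm{diag}(1/(3k+1),1/(3k+2))$; the degree-$3$ middle factor $AJ(B^{(k+1)})^{-1}J^{T}A$ is exactly what is needed for the $\alpha_i$-dependence to cancel. Once you replace your claimed identity by the correct one and include $J$ and $J^{T}$ in the final $2\times 2$ multiplication, the computation closes as in the paper.
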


\begin{proof}
First recall that 
\[
[\varphi_{1}^{(k)},\varphi_{2}^{(k)}]M_{k} T_{k}
=[\varphi_{1}^{(k+1)},\varphi_{2}^{(k+1)}]AN_{k+1}^{-1}.
\]
Restricting the equality above onto $z=0$
and applying Theorem \ref{rest}, 
we obtain
\[
AR_{2k}A^{-1}(M_{k}|_{z=0})(T_{k}|_{z=0})
=AR_{2k+2}A^{-1}A(N_{k+1}|_{z=0})^{-1}.
\]
Therefore, 
\[T_k=
T_{k}|_{z=0}=
(M_{k}|_{z=0})^{-1}AR_{2k}^{-1}R_{2k+2}(N_{k+1}|_{z=0})^{-1}.
\]
By Proposition 2.6 in \cite{atprimitive},
\[
R_{2k}^{-1}R_{2k+2}=J(B^{(k+1)})^{-1}J^{T}A.
\]
Now we can compute $T_k=T_{k}|_{z=0}$ directly as follows:
\begin{align*}
T_k=T_{k}|_{z=0} &=
(M_{k}|_{z=0})^{-1}AJ(B^{(k+1)})^{-1}J^{T}A(N_{k+1}|_{z=0})^{-1} \\
&=
\begin{pmatrix}
\dfrac{1}{3k+1} & 0 \\
0 & \dfrac{1}{3k+2}
\end{pmatrix}.
\end{align*}

\end{proof}

\noindent
\textit{Proof of Theorem \ref{cons}}. 
Combine Propositions \ref{srb+}, \ref{srb-} and \ref{t}. 
\medskip

\noindent
\textit{Proof of Theorem \ref{main}}. 
First, note that $P_1$ and $P_2$ are unique up to nozero-constant when $\Phi$ is of the type 
$A_2$ since there is no $W$-invariant polynomial of degree one. 
Therefore, the construction in Theorem \ref{cons} shows that 
for any choice of $P_1,\ P_2$ and $D$, the bases constructed by them are unique up to nonzero constants.
Moreover, we can connect the $\sp$ and $\sm$ using 
the inner product matrix $A$ as Remark \ref{innermatrix}.
Hence we may apply Theorem \ref{cons} starting from $[\partial_1,\partial_2]$ inductively to 
obtain the bases stated in Theorem \ref{main}, which completes the proof.

\providecommand{\bysame}{\leavevmode\hbox to3em{\hrulefill}\thinspace}
\providecommand{\MR}{\relax\ifhmode\unskip\space\fi MR }
\providecommand{\MRhref}[2]{%
  \href{http://www.ams.org/mathscinet-getitem?mr=#1}{#2}
}
\providecommand{\href}[2]{#2}

\end{document}